\newtheorem{Thm}{Theorem} 
\newaliascnt{Lem}{Thm}
\newtheorem{Lem}[Lem]{Lemma}
\newaliascnt{Prop}{Thm}
\newtheorem{Prop}[Prop]{Proposition}
\newaliascnt{Cor}{Thm}
\newtheorem{Cor}[Cor]{Corollary}
\newaliascnt{Con}{Thm}
\theoremstyle{definition}
\newaliascnt{Def}{Thm}
\newaliascnt{Ex}{Thm}
\renewcommand{\phi}{\varphi}
\newcommand{\C}{\mathrm{C}}
\newcommand{\N}{\mathrm{N}}
\newcommand{\Z}{\mathrm{Z}}
\newcommand{\ZZ}{\mathbb{Z}}
\newcommand{\CC}{\mathbb{C}}
\newcommand{\QQ}{\mathbb{Q}}
\newcommand{\FF}{\mathbb{F}}
\newcommand{\pcore}{\mathrm{O}}
\newcommand{\GL}{\operatorname{GL}}
\newcommand{\SL}{\operatorname{SL}}
\newcommand{\SU}{\operatorname{SU}}
\newcommand{\PSU}{\operatorname{PSU}}
\newcommand{\PSp}{\operatorname{PSp}}
\newcommand{\PSL}{\operatorname{PSL}}
\newcommand{\Sz}{\operatorname{Sz}}
\newcommand{\Irr}{\operatorname{Irr}}
\newcommand{\IBr}{\operatorname{IBr}}
\newcommand{\Syl}{\operatorname{Syl}}
\newcommand{\tr}{\operatorname{tr}}
\newcommand{\diag}{\operatorname{diag}}
\mathchardef\ordinarycolon\mathcode`\:  
\title{The reciprocal character\\ of the conjugation action}
\author{Benjamin Sambale\footnote{Institut für Algebra, Zahlentheorie und Diskrete Mathematik, Leibniz Universität Hannover, Welfengarten 1, 30167 Hannover, Germany,
\href{mailto:sambale@math.uni-hannover.de}{sambale@math.uni-hannover.de}}}
\date{\today}
\begin{document}
\frenchspacing
\maketitle
\begin{abstract}\noindent
For a finite group $G$ we investigate the smallest positive integer $e(G)$ such that the map sending $g\in G$ to $e(G)|G:\C_G(g)|$ is a generalized character of $G$. It turns out that $e(G)$ is strongly influenced by local data, but behaves irregularly for non-abelian simple groups. We interpret $e(G)$ as an elementary divisor of a certain non-negative integral matrix related to the character table of $G$. Our methods applied to Brauer characters also answers a recent question of Navarro: The $p$-Brauer character table of $G$ determines $|G|_{p'}$.
\end{abstract}

\textbf{Keywords:} conjugation action, generalized character\\
\textbf{AMS classification:} 20C15, 20C20

\section{Introduction}

The conjugation action of a finite group $G$ on itself determines a permutation character $\pi$ such that $\pi(g)=|\C_G(g)|$ for $g\in G$. Many authors have studied the decomposition of $\pi$ into irreducible complex characters (see \cite{AdinFrumkin,ChenConj,Fields2,Fields1,Formanek,Frame,HeideTiep,Roichman,Roth,Scharf}). In the present paper we study the reciprocal class function $\tilde\pi$ defined by 
\[\tilde\pi(g):=|\C_G(g)|^{-1}\] 
for $g\in G$.
By a result of Knörr (see \cite[Problem~1.3(c)]{Navarro2} or \autoref{prop} below), 
there exists a positive integer $m$ such that $m\tilde\pi$ is a generalized character of $G$. 
Since $\pi(1)=|G|$, it is obvious that $|G|$ divides $m$. If also $n\tilde\pi$ is a generalized character, then so is $\gcd(m,n)\tilde\pi$ by Euclidean division. 
We investigate the smallest positive integer $e(G)$ such that $e(G)|G|\tilde\pi$ is a generalized character. In most situations it is more convenient to work with the complementary divisor $e'(G):=|G|/e(G)$ which is also an integer by \autoref{prop} below.

We first demonstrate that many local properties of $G$ are encoded in $e(G)$. In the subsequent section we illustrate by examples that most of our theorems cannot be generalized directly. For many simple groups we show that $e'(G)$ is “small”. In the last section we develop a similar theory of Brauer characters. Here we take the opportunity to show that $|G|_{p'}$ is determined by the $p$-Brauer character table of $G$. This answers \cite[Question~A]{NavarroBrauerCT}. Finally, we give a partial answer to \cite[Question~C]{NavarroBrauerCT}.

\section{Ordinary characters}

Our notation follows mostly Navarro's books~\cite{Navarro,Navarro2}. In particular, the set of algebraic integers in $\CC$ is denoted by $\mathbf{R}$. The set of $p$-elements (resp. $p'$-elements) of $G$ is denoted by $G_p$ (resp. $G_{p'}$, deviating from \cite{Navarro}). The usual scalar product of class functions $\chi$, $\psi$ of $G$ is denoted by $[\chi,\psi]=\frac{1}{|G|}\sum_{g\in G}\chi(g)\overline{\psi(g)}$. 
For any real generalized character $\rho$ and any $\chi\in\Irr(G)$ we will often use the fact $[\rho,\chi]=[\rho,\overline{\chi}]$ without further reference.

\begin{Prop}\label{prop}
For every finite group $G$ the following holds:
\begin{enumerate}[(i)]
\item\label{divs} $e(G)$ divides $|G:\Z(G)|$. In particular, $e'(G)$ is an integer divisible by $|\Z(G)|$.
\item\label{two} If $|G|$ is even, so is $e'(G)$. 
\end{enumerate}
\end{Prop}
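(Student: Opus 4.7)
Write $f:=|G|\tilde\pi$, so $f(g)=|g^G|=|G:\C_G(g)|$; then (i) amounts to showing that $|G:\Z(G)|\,f$ is a generalized character, and (ii) to showing that $(|G|/2)\,f$ is one whenever $|G|$ is even. For $\chi\in\Irr(G)$ I would start from the identity
\[
|G|[f,\chi] \;=\; \sum_{K}|K|^2\overline{\chi(g_K)} \;=\; \chi(1)\,\omega_{\overline\chi}\!\left(\sum_K|K|\hat K\right),
\]
where $K$ ranges over the conjugacy classes of $G$. Two basic integrality facts will be used throughout. First, the right-hand side is an algebraic integer because $\omega_{\overline\chi}$ maps $\Z(\ZZ G)$ into $\bR$. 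Second, $[f,\chi]\in\QQ$ because $f$ is Galois-invariant: for $n$ coprime to $|G|$ the map $g\mapsto g^n$ preserves $\C_G(g)$ and hence $|g^G|$, so the Galois action on characters yields $[f,\chi^{\sigma}]=[f,\chi]$. Together these give $|G|[f,\chi]\in\bR\cap\QQ=\ZZ$, which is Knörr's bound $e(G)\mid|G|$.

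For (i) the decisive step is that $[f,\chi]=0$ whenever the central character of $\chi$ is non-trivial. If $\chi|_{\Z(G)}=\chi(1)\lambda$ with $\lambda\neq 1_{\Z(G)}$, then $\chi(gz)=\lambda(z)\chi(g)$ and $f(gz)=f(g)$ for all $g\in G$ and $z\in\Z(G)$; grouping $[f,\chi]$ by cosets of $\Z(G)$ factors out $\sum_{z\in\Z(G)}\overline{\lambda(z)}=0$. The remaining $\chi\in\Irr(G\mid 1_{\Z(G)})$ are lifted from characters $\overline\chi$ of $\overline G:=G/\Z(G)$, and $f$ is similarly the lift of the class-size function $\overline f$ on $\overline G$ (since $|g^G|=|\overline g^{\overline G}|$). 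A direct coset computation then gives $[f,\chi]_G=[\overline f,\overline\chi]_{\overline G}$, so
\[
|G:\Z(G)|\,[f,\chi]_G \;=\; |\overline G|\,[\overline f,\overline\chi]_{\overline G} \;\in\; \ZZ
\]
by the preliminary integrality applied to $\overline G$. Thus $|G:\Z(G)|\,[f,\chi]\in\ZZ$ for every $\chi\in\Irr(G)$, so $|G:\Z(G)|\,f$ is a generalized character, $e(G)\mid|G:\Z(G)|$, and $e'(G)=|G|/e(G)$ is an integer divisible by $|\Z(G)|$.

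For (ii) the idea is to reduce the sum defining $|G|[f,\chi]$ modulo $2$. The identity $|K|^2=|K|+2\binom{|K|}{2}$ yields
\[
|G|[f,\chi] \;=\; \sum_K|K|\,\overline{\chi(g_K)} + 2\sum_K\binom{|K|}{2}\overline{\chi(g_K)} \;=\; |G|\,[\chi,1_G] + 2\beta
\]
for some $\beta\in\bR$. Since both $|G|[f,\chi]$ and $|G|[\chi,1_G]$ lie in $\ZZ$, we conclude $\beta\in\bR\cap\QQ=\ZZ$; when $|G|$ is even the first summand is also even, forcing $|G|[f,\chi]$ to be even. Hence $(|G|/2)\,f$ is a generalized character and $e'(G)$ is even.

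The main obstacle is the bootstrapping in (i): the reduction to $\overline G$ feeds on the plain integrality $|\overline G|[\overline f,\overline\chi]\in\ZZ$ applied to the quotient, so the proposition hinges on a clean proof of $|G|[f,\chi]\in\ZZ$. The pairing of the algebraic-integer form $\chi(1)\omega_{\overline\chi}(\sum_K|K|\hat K)$ with the Galois invariance of $f$ supplies this in one stroke and is the technical engine of the whole argument.
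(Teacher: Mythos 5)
Your preliminary integrality argument ($|G|[f,\chi]\in\bR\cap\QQ=\ZZ$ via central characters plus Galois invariance of $f$) and your part (ii) are correct and essentially reproduce the paper's proof: the paper encodes $m^2\equiv m\pmod 2$ via a maximal ideal of $\bR$ containing $2$, while your identity $|K|^2=|K|+2\binom{|K|}{2}$ is the same congruence in more elementary dress. The vanishing of $[f,\chi]$ for $\chi$ with non-trivial central character likewise matches the paper.

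There is, however, a false step in your reduction to $\overline G=G/\Z(G)$ in part (i): the claim $|g^G|=|\overline g^{\overline G}|$ does not hold in general, so $f$ is \emph{not} the lift of the class-size function of $\overline G$. Already for $G=Q_8$ one has $|i^G|=2$, while $G/\Z(G)$ is abelian and all of its classes are singletons. As written, you therefore cannot invoke ``the preliminary integrality applied to $\overline G$'' as a black box, since that statement concerns $\overline G$'s own class sizes. The repair is short: $f$ does factor through $\overline G$ (because $\C_G(gz)=\C_G(g)$ for $z\in\Z(G)$), so define $\overline f(gZ):=|g^G|$; then $[f,\chi]_G=[\overline f,\overline\chi]_{\overline G}$ still holds, and $|\overline G|\,[\overline f,\overline\chi]_{\overline G}=\sum_{gZ\in G/Z}|g^G|\,\overline{\chi(g)}$ is an algebraic integer termwise and is Galois-rational because $|(g^n)^G|=|g^G|$ for $n$ coprime to $|G|$; hence it is an integer. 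You must rerun the two-step integrality argument for this $\overline f$ rather than quote it for $\overline G$. After this fix your sum is exactly the quantity $\sum_{gZ\in G/Z}|G:\C_G(g)|\chi(g)$ in the paper's proof, and the two arguments coincide.
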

\begin{proof}\hfill
\begin{enumerate}[(i)]
\item Let $Z:=\Z(G)$. We need to check that $|G||G:Z|[\tilde\pi,\chi]$ is an integer for every $\chi\in\Irr(G)$. Since $\tilde\pi$ is constant on the cosets of $Z$, we obtain
\begin{align*}
|G||G:Z|[\tilde\pi,\chi]&=\sum_{g\in G}\frac{|G:\C_G(g)|\chi(g)}{|Z|}=\sum_{gZ\in G/Z}\frac{|G:\C_G(g)|}{|Z|}\sum_{z\in Z}\chi(gz)\\
&=\sum_{gZ\in G/Z}\frac{|G:\C_G(g)|\chi(g)}{|Z|\chi(1)}\sum_{z\in Z}\chi(z)=[\chi_Z,1_Z]\sum_{gZ\in G/Z}\frac{|G:\C_G(g)|\chi(g)}{\chi(1)}.
\end{align*}
Hence, only the characters $\chi\in\Irr(G/Z)$ can occur as constituents of $\tilde\pi$ and in this case
\[|G||G:Z|[\tilde\pi,\chi]=\sum_{gZ\in G/Z}|G:\C_G(g)|\chi(g)\]
is an algebraic integer. Since the Galois group of the cyclotomic field $\QQ_{|G|}$ permutes the conjugacy classes of $G$ (preserving their lengths), $|G||G:Z|[\tilde\pi,\chi]$ is also rational, so it must be an integer. 

\item Let $|G|$ be even. As in \eqref{divs}, it suffices to show that $|G|^2[\tilde\pi,\chi]$ is even for every $\chi\in\Irr(G)$. Let $\Gamma$ be a set of representatives for the conjugacy classes of $G$.
Let $I$ be a maximal ideal of $\mathbf{R}$ containing $2$. For every integer $m$ we have $m^2\equiv m\pmod{I}$. Hence, 
\begin{align*}
|G|^2[\tilde\pi,\chi]&=\sum_{g\in G}|G:\C_G(g)|\chi(g)=\sum_{x\in \Gamma}|G:\C_G(x)|^2\chi(x)\\
&\equiv \sum_{x\in \Gamma}|G:\C_G(x)|\chi(x)=\sum_{g\in G}\chi(g)=|G|[1_G,\chi]\equiv 0\pmod{I}.
\end{align*}
It follows that $|G|^2[\tilde\pi,\chi]\in\ZZ\cap I=2\ZZ$.\qedhere
\end{enumerate}
\end{proof}

The proof of part \eqref{divs} actually shows that $e(G)|G|\tilde\pi$ is a generalized character of $G/\Z(G)$ and $|G||G:\Z(G)|[\tilde\pi,\chi]$ is divisible by $\chi(1)$.
Part \eqref{two} might suggest that the smallest prime divisor of $|G|$ always divides $e'(G)$. However, there are non-trivial groups $G$ such that $e'(G)=1$. A concrete example of order $3^95^5$ will be constructed in the next section.
We will show later that $e(G)=1$ if and only if $G$ is abelian. 

\goodbreak
\begin{Prop}\label{nil}\hfill
\begin{enumerate}[(i)]
\item\label{direct} For finite groups $G_1$ and $G_2$ we have $e(G_1\times G_2)=e(G_1)e(G_2)$.
\item If $G$ is nilpotent, then $e'(G)=|\Z(G)|$ and every $\chi\in\Irr(G/\Z(G))$ is a constituent of $\tilde\pi$.
\end{enumerate}
\end{Prop}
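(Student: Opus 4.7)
The plan is to handle (i) and (ii) separately and, via (i), reduce (ii) to the case that $G$ is a $p$-group. Throughout I write $R_\chi^H:=|H|[\tilde\pi_H,\chi]\in\QQ$, so that $e(H)$ is the least common multiple of the denominators (in lowest terms) of the $R_\chi^H$ as $\chi$ ranges over $\Irr(H)$.

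For \textbf{(i)}, the factorisation $\C_{G_1\times G_2}((g_1,g_2))=\C_{G_1}(g_1)\times\C_{G_2}(g_2)$ yields $\tilde\pi_{G_1\times G_2}=\tilde\pi_{G_1}\otimes\tilde\pi_{G_2}$ and therefore $R_{\chi_1\otimes\chi_2}^{G_1\times G_2}=R_{\chi_1}^{G_1}R_{\chi_2}^{G_2}$, from which the divisibility $e(G_1\times G_2)\mid e(G_1)e(G_2)$ is immediate. For the opposite direction I fix a prime $p$ and pick $\chi_i^\star\in\Irr(G_i)$ maximising the $p$-adic valuation of the denominator of $R_{\chi_i^\star}^{G_i}$, which by definition equals $v_p(e(G_i))$. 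When both maxima are positive, the numerator of each $R_{\chi_i^\star}^{G_i}$ is coprime to $p$ (lowest terms), so the product $R_{\chi_1^\star}^{G_1}R_{\chi_2^\star}^{G_2}$ has denominator of $p$-adic valuation exactly $v_p(e(G_1))+v_p(e(G_2))$. If one maximum, say $v_p(e(G_2))$, vanishes, I replace $\chi_2^\star$ by some $\chi_2\in\Irr(G_2)$ whose $R_{\chi_2}^{G_2}$ is a $p$-adic unit; such a $\chi_2$ exists because pairing $\tilde\pi_H$ with the regular character $\rho_H=\sum_\chi\chi(1)\chi$ yields
\[
\sum_{\chi\in\Irr(H)}\chi(1)\,R_\chi^H \;=\; |H|\,\tilde\pi_H(1) \;=\; 1,
\]
so if every $R_\chi^{G_2}$ had $p$-adic valuation $\geq 1$ then the right-hand side $1$ would too.

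For \textbf{(ii)}, by (i) and the Sylow decomposition $G=\prod_p S_p$ of a nilpotent group we may assume $G$ is a $p$-group with centre $Z:=\Z(G)$. The proof of \autoref{prop}(i) gives, for every $\chi\in\Irr(G/Z)$,
\[
|G||G:Z|\,[\tilde\pi,\chi]=\sum_{gZ\in G/Z}|g^G|\chi(g).
\]
For $\chi=1_G$ the trivial coset contributes $1$ and every other coset contributes a positive $p$-power, so the sum is $\equiv 1\pmod p$. Hence $|G|[\tilde\pi,1_G]$ has denominator exactly $|G:Z|$ in lowest terms, forcing $|G:Z|\mid e(G)$; combined with \autoref{prop}(i) this yields $e'(G)=|Z|$.

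To show that every $\chi\in\Irr(G/Z)$ is a constituent of $\tilde\pi$, consider the central element $\tilde K:=\sum_{gZ\in G/Z}|g^G|\cdot gZ\in\Z(\ZZ[G/Z])$, for which the displayed formula gives $\omega_\chi(\tilde K)\,\chi(1)=|G||G:Z|[\tilde\pi,\chi]$. Since $|g^G|$ is a positive power of $p$ for every $gZ\neq Z$, we may write $\tilde K=1+pT$ with $T\in\Z(\ZZ[G/Z])$; hence $\omega_\chi(\tilde K)=1+p\,\omega_\chi(T)$ with $\omega_\chi(T)\in\bR$ an algebraic integer. Vanishing of $\omega_\chi(\tilde K)$ would force $\omega_\chi(T)=-1/p\in\QQ\setminus\ZZ$, contradicting $\QQ\cap\bR=\ZZ$. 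The most delicate step is the denominator bookkeeping in (i) at primes where $v_p(e(G_i))$ vanishes for one $i$; the regular-character identity above is exactly what handles that case.
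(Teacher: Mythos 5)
Your proof is correct, and it departs from the paper's in two substantive places. For the divisibility $e(G_1)e(G_2)\mid e(G_1\times G_2)$ in (i), the paper argues via Bézout: it asserts that the integers $e(G_i)|G_i|[\tilde\pi_i,\chi_i]$ have greatest common divisor $1$, writes $1$ as an integral combination of them, and multiplies the two combinations. Your prime-by-prime valuation argument reaches the same conclusion, and your identity $\sum_{\chi}\chi(1)R^H_\chi=|H|\tilde\pi_H(1)=1$ is precisely the ingredient that also underpins the paper's gcd claim (which is not a purely formal consequence of $e(H)$ being the lcm of the denominators), so your treatment of the case where one valuation vanishes is, if anything, more explicit. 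For the constituent claim in (ii), the paper invokes the central character congruence for the principal block (\cite[Theorem~3.2]{Navarro}) modulo a maximal ideal of $\bR$ containing $p$; you instead write the central element as $\tilde K=1+pT$ in $\Z(\ZZ[G/Z])$ and use only the integrality of central character values together with $\QQ\cap\bR=\ZZ$. For $p$-groups the block-theoretic statement degenerates to exactly this observation, so your route is more elementary and self-contained at the cost of generality. The remaining steps --- the reduction to $p$-groups via (i), the formula $|G||G:Z|[\tilde\pi,\chi]=\sum_{gZ\in G/Z}|G:\C_G(g)|\chi(g)$ extracted from \autoref{prop}, and the congruence $\equiv 1\pmod{p}$ for $\chi=1_G$ that pins down $e'(G)=|\Z(G)|$ --- coincide with the paper's.
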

\begin{proof}\hfill
\begin{enumerate}[(i)]
\item It is clear that $\tilde\pi=\tilde\pi_1\times\tilde\pi_2$ where $\tilde\pi_i$ denotes the respective class function on $G_i$. This shows that $e(G_1\times G_2)$ divides $e(G_1)e(G_2)$. Moreover, $[\tilde\pi,\chi_1\times\chi_2]=[\tilde\pi_1,\chi_1][\tilde\pi_2,\chi_2]$ for $\chi_i\in\Irr(G_i)$. By the definition of $e(G_i)$, the greatest common divisor of $\{e(G_i)|G_i|[\tilde\pi_i,\chi_i]:\chi_i\in\Irr(G_i)\}$ is $1$. In particular, $1$ can be expressed as an integral linear combination of these numbers. Therefore, $1$ is also an integral linear combination of $\{e(G_1)e(G_2)|G_1G_2|[\tilde\pi,\chi_1\times\chi_2]:\chi_i\in\Irr(G_i)\}$. This shows that $e(G_1)e(G_2)$ divides $e(G_1\times G_2)$.

\item By \eqref{direct} we may assume that $G$ is a $p$-group. By \autoref{prop}, $|Z|$ divides $e'(G)$ where $Z:=\Z(G)$. Let $I$ be a maximal ideal of $\mathbf{R}$ containing $p$. Let $\chi\in\Irr(G/Z)$. Since all characters of $G$ lie in the principal $p$-block of $G$, \cite[Theorem~3.2]{Navarro} implies
\[\frac{|G||G:Z|}{\chi(1)}[\tilde\pi,\chi]=\sum_{gZ\in G/Z}\frac{|G:\C_G(g)|\chi(g)}{\chi(1)}\equiv\sum_{gZ\in G/Z}|G:\C_G(g)|\equiv 1\pmod{I}.\]
Therefore, $\chi$ is a constituent of $\tilde\pi$. Taking $\chi=1_G$ yields $|G||G:Z|[\tilde\pi,1_G]\equiv 1\pmod{p}$, so $e'(G)$ is not divisible by $p|Z|$.\qedhere
\end{enumerate}
\end{proof}

We will see in the next section that nilpotent groups cannot be characterized in terms of $e(G)$. 
Moreover, in general not every $\chi\in\Irr(G/\Z(G))$ is a constituent of $\tilde\pi$ (the smallest counterexample is $\mathtt{SmallGroup}(384,5556)$). The corresponding property of $\pi$ was conjectured in \cite{Roth} and disproved in \cite{Formanek}. We do not know any simple group $S$ such that some $\chi\in\Irr(S)$ does not occur in $\tilde\pi$.

Now we study $e(G)$ in the presence of local information. The following reduction to the Sylow normalizer simplifies the construction of examples.

\begin{Lem}\label{red}
Let $P$ be a Sylow $p$-subgroup of $G$ and let $N:=\N_G(P)$. Then $p$ divides $e'(G)$ if and only if $p$ divides $e'(N)$. 
In particular, if $\C_P(N)\ne 1$, then $e'(G)\equiv 0\pmod{p}$. Now suppose that for all $x\in\pcore_{p'}(N)$ we have
\[\sum_{y\in\Z(P)}|H:\C_H(y)|\equiv 0\pmod{p}\]
where $H:=\C_N(x)$. Then $e'(G)\equiv 0\pmod{p}$.
\end{Lem}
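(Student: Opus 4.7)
The plan is to work modulo a maximal ideal $I$ of $\mathbf{R}$ containing $p$ and to rewrite both $p\mid e'(G)$ and $p\mid e'(N)$ as the same mod-$I$ congruence. As in the proof of \autoref{prop}(ii), $p\mid e'(G)$ is equivalent to $\sum_{g\in G}|G:\C_G(g)|\chi(g)\equiv 0\pmod I$ for every $\chi\in\Irr(G)$, and similarly for $N$.

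First I would localize the sum. Only $g$ with $p\nmid|G:\C_G(g)|$ contribute modulo $I$, and for such $g$ the centralizer $\C_G(g)$ contains some Sylow $p$-subgroup, so up to conjugation $g\in\C_G(P)=\C_N(P)$. Sylow counting shows that $[g]_G\cap\C_N(P)=[g]_N$ for $g\in\C_N(P)$ and that $|G:\C_G(g)|\equiv|G:N|\,|N:\C_N(g)|\pmod p$, since $[\C_G(g):\C_N(g)]$ is the number of Sylow $p$-subgroups of $\C_G(g)$, hence $\equiv1\pmod p$. This rewrites the sum as $|G:N|^2\sum_{g\in\C_N(P)}|N:\C_N(g)|\chi(g)\pmod I$.

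Next I would use that $\pcore_{p'}(N)$ centralizes $P$ (a coprime-action argument, since both are normal in $N$ with trivial intersection), giving $\C_N(P)=\Z(P)\times\pcore_{p'}(N)$. Writing $g=yx$ with $y\in\Z(P)$, $x\in\pcore_{p'}(N)$, the congruence $\chi(yx)\equiv\chi(x)\pmod I$ (standard for a $p$-element $y$ commuting with a $p'$-element $x$) together with $\C_N(yx)=\C_H(y)$ where $H:=\C_N(x)\supseteq P$ yields
\[|G|^2[\tilde\pi,\chi]\equiv|G:N|^2\sum_{x\in\pcore_{p'}(N)}|N:H|\,T(x)\,\chi(x)\pmod I,\]
with $T(x):=\sum_{y\in\Z(P)}|H:\C_H(y)|$. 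The same derivation carried out inside $N$ gives $|N|^2[\tilde\pi,\psi]\equiv\sum_{x}|N:H|\,T(x)\,\psi(x)\pmod I$ for $\psi\in\Irr(N)$.

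With this reduction the claims fall out quickly. For (c) the hypothesis $T(x)\equiv 0\pmod p$ makes every term vanish, so $p\mid e'(G)$. For (b), $\C_P(N)\le\Z(N)$ together with $\C_P(N)\ne 1$ gives $p\mid|\Z(N)|$, hence $p\mid e'(N)$ by \autoref{prop}(i), and the first assertion then yields $p\mid e'(G)$. The implication $p\mid e'(N)\Rightarrow p\mid e'(G)$ in (a) is immediate on substituting $\psi=\chi|_N$, since $|G:N|^2$ is a unit modulo $p$. The main obstacle is the converse: I would exploit that the test function is supported on the $p$-regular set $\pcore_{p'}(N)$, pass through Brauer's decomposition $\chi|_{G_{p'}}=\sum d_{\chi\phi}\phi$, and use that the resulting mod-$I$ linear condition on $\IBr(G)$-values reduces, after restriction to $N$, to the same condition tested against $\IBr(N)$; the technical heart of this direction is to show that the relevant decomposition/restriction matrix has full column rank modulo $I$, which is where block-theoretic input is likely unavoidable.
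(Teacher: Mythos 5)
Most of your argument is sound and close in spirit to the paper's: the reduction of $|G|^2[\tilde\pi,\chi]$ modulo $I$ to a sum over $\C_G(P)=\Z(P)\times\pcore_{p'}(N)$, the congruence $|G:\C_G(g)|\equiv|N:\C_N(g)|\pmod I$ via Sylow counting, the derivation of part (c) from $T(x)\equiv0$, the treatment of $\C_P(N)\ne1$ through $\Z(N)$ and \autoref{prop}, and the implication $p\mid e'(N)\Rightarrow p\mid e'(G)$ by substituting $\psi=\chi_N$. (Your route to the localized sum — discarding classes of length divisible by $p$ and invoking Burnside fusion — is a harmless variant of the paper's orbit count for the conjugation action of $P$ on $G$.)

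The genuine gap is the converse direction $p\mid e'(G)\Rightarrow p\mid e'(N)$, which you correctly identify as the main obstacle but do not prove. Your sketched strategy — writing $\chi_{G_{p'}}=\sum_\phi d_{\chi\phi}\phi$ and trying to show that the condition tested against $\{\phi_N:\phi\in\IBr(G)\}$ implies the condition tested against $\IBr(N)$ — runs into a real problem: the restrictions $\phi_N$ are non-negative integral combinations of $\IBr(N)$, and there is no reason for $\IBr(N)$ to lie in their span modulo $I$ on the relevant classes, so "full column rank modulo $I$" of the restriction matrix is not something you can expect to establish. The paper closes this direction without any Brauer-character input, by going in the opposite direction along the subgroup inclusion: since elements of $\C_G(P)$ are $G$-conjugate if and only if they are $N$-conjugate (Burnside fusion), the test functional is the restriction to $N$ of a class function $\gamma$ of $G$ supported on the classes meeting $\C_G(P)$, with $\gamma(x)=|\C_N(x)|^{-1}$ there. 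Frobenius reciprocity then gives $[\gamma_N,\psi]=[\gamma,\psi^G]$ for $\psi\in\Irr(N)$, and the already-established congruence
\[|G||N|[\tilde\pi(N),\psi]\equiv\sum_{x\in\C_G(P)}|N:\C_N(x)|\psi^G(x)\equiv|G|^2[\tilde\pi,\psi^G]\equiv0\pmod I\]
follows by applying your (and the paper's) localization to the induced character $\psi^G$, which is a genuine character of $G$ and hence covered by the hypothesis $p\mid e'(G)$. In short: induce, don't restrict. With that substitution your proof is complete; without it, the "only if" half of the first assertion is unproved.
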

\begin{proof}
Let $I$ be a maximal ideal of $\mathbf{R}$ containing $p$. Let $\chi\in\Irr(G)$.
The conjugation action of $P$ on $G$ shows that
\[|G|^2[\tilde\pi,\chi]\equiv\sum_{x\in\C_G(P)}|G:\C_G(x)|\chi(x)\pmod{I}.\]
For $x\in\C_G(P)$, Sylow's Theorem implies
\begin{align*}
|G:\C_G(x)|\equiv|G:\C_G(x)||\C_G(x):\C_N(x)|=|G:N||N:\C_N(x)|\equiv|N:\C_N(x)|\pmod{I}.
\end{align*}
Hence, 
\begin{equation}\label{eq}
|G|^2[\tilde\pi,\chi]\equiv\sum_{x\in\C_G(P)}|N:\C_N(x)|\chi(x)\equiv\sum_{x\in N}|N:\C_N(x)|\chi(x)=|N|^2[\tilde\pi(N),\chi_N]\pmod{I}
\end{equation}
where $\tilde\pi(N)(x):=|\C_N(x)|^{-1}$ for $x\in N$. 
If $e'(N)\equiv 0\pmod{p}$, then the right hand side of \eqref{eq} is $0$ and so is the left hand side. This shows that $e'(G)\equiv 0\pmod{p}$.
If $\C_P(N)\ne 1$, then $e'(N)\equiv 0\pmod{p}$ by \autoref{prop}. 

Now suppose conversely that $e'(G)\equiv 0\pmod{p}$. Since $|G|_p=|N|_p$, it suffices to show that 
\[|G||N|[\tilde\pi(N),\psi]\equiv 0\pmod{I}\] 
for every $\psi\in\Irr(N)$. By an elementary fusion argument of Burnside, elements in $\C_G(P)$ are conjugate in $G$ if and only if they are conjugate in $N$.
Hence, we can define a class function $\gamma$ on $G$ by
\[\gamma(g):=\begin{cases}
\tilde\pi(N)(x)&\text{if $g$ is conjugate in $G$ to }x\in\C_G(P),\\
0&\text{otherwise}
\end{cases}\]
for every $g\in G$.
By \eqref{eq} and Frobenius reciprocity,
\begin{align*}
|G||N|[\tilde\pi(N),\psi]&\equiv|G||N|[\gamma_N,\psi]\equiv|G||N|[\gamma,\psi^G]\equiv\sum_{x\in\C_G(P)}|N:\C_N(x)|\psi^G(x)\\
&\equiv|G|^2[\tilde\pi,\psi^G]\equiv 0\pmod{I}
\end{align*}
as desired.

For the last claim we may assume that $P\unlhd G$ and $N=G$. 
Recall that $\C_G(P)=\Z(P)\times Q$ where $Q=\pcore_{p'}(G)$. Moreover, $\chi(x)\equiv\chi(x_{p'})\pmod{I}$ for every $x\in G$ by \cite[Lemma~4.19]{Navarro2}. Hence,
\[|G|^2[\tilde\pi,\chi]\equiv\sum_{x\in Q}\chi(x)\sum_{y\in\Z(P)}|G:\C_G(xy)|\pmod{I}.\]
Since $\C_G(xy)=\C_G(x)\cap\C_G(y)=\C_H(y)$ where $x\in Q$ and $H:=\C_G(x)$, we conclude that
\[ \sum_{y\in\Z(P)}|G:\C_G(xy)|=|G:H|\sum_{y\in\Z(P)}|H:\C_H(y)|\equiv 0\pmod{I}\]
and the claim follows.
\end{proof}

In the situation of \autoref{red} it is not true that $e'(G)$ and $e'(N)$ have the same $p$-part. In general, $\tilde\pi$ is by no means compatible with restriction to arbitrary subgroups as the reader can convince herself. 

\begin{Lem}\label{knorr}
Let $N:=\pcore_{p'}(G)$. Let $g_p$ be the $p$-part of $g\in G$. Then the map $\gamma:G\to\CC$, $g\mapsto|N:\C_N(g_p)|$ is a generalized character of $G$.
\end{Lem}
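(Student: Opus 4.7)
My plan is to apply Brauer's characterization of generalized characters: $\gamma$ lies in $R(G)$ iff $\gamma|_E$ lies in $R(E)$ for every elementary subgroup $E \le G$. The crucial simplification is that $\gamma$ is $p$-constant, i.e.\ $\gamma(g) = \gamma(g_p)$. So for an $\ell$-elementary subgroup $E = C \times L$ (with $C$ cyclic of $\ell'$-order and $L$ an $\ell$-group), the restriction $\gamma|_E$ depends only on a $p$-part: if $\ell = p$ then $g_p = l$ for $g = cl \in E$, while if $\ell \neq p$ then $g_p = c_p$, since elements of $L$ have trivial $p$-part. In both cases $\gamma|_E$ is inflated from a function on a $p$-subgroup of $G$, and a further application of Brauer's characterization within that $p$-subgroup reduces everything to the following claim: \emph{for every cyclic $p$-subgroup $\langle x \rangle \le G$, the map $\phi\colon y \mapsto |N : C_N(y)|$ is a generalized character of $\langle x \rangle$.}

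The function $\phi$ is integer-valued and constant on the set of generators of each subgroup $\langle y \rangle \le \langle x\rangle$ (since $C_N(y) = C_N(\langle y \rangle)$), hence Galois-invariant; so its Fourier decomposition along $\Irr(\langle x \rangle)$ automatically has rational coefficients. The task is to show these coefficients are integers, which is a condition on congruences between the values $\phi_i := |N : C_N(\langle x^{p^i}\rangle)|$ for $0 \le i \le a$, where $|x| = p^a$.

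The key input is a congruence of the form
\[
\phi_i \equiv \phi_{i+1} \pmod{p}.
\]
This follows from coprime action: the cyclic group $\langle x^{p^i}\rangle/\langle x^{p^{i+1}}\rangle \cong \mathbb{Z}/p$ acts on $C_N(\langle x^{p^{i+1}}\rangle)$ with fixed points $C_N(\langle x^{p^i}\rangle)$, and since every non-trivial orbit has $p$-power size $\geq p$, one obtains $|C_N(\langle x^{p^{i+1}}\rangle)| \equiv |C_N(\langle x^{p^i}\rangle)| \pmod p$. Both sides being coprime to $p$, the ratio $[C_N(\langle x^{p^{i+1}}\rangle) : C_N(\langle x^{p^i}\rangle)]$ is congruent to $1 \pmod p$, which translates into $\phi_i \equiv \phi_{i+1} \pmod p$; combined with $\phi_a = \phi(1) = 1$ this gives $\phi_i \equiv 1 \pmod p$ for every $i$.

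The main obstacle is ensuring that this single congruence (iterated down the cyclic tower) is enough to force every Fourier coefficient of $\phi$ into $\mathbb{Z}$. I would verify this by an explicit inversion of the character table of $\langle x \rangle$ restricted to Galois-invariant characters, using the standard Ramanujan-type evaluation of $\eta_i(x^k) = \sum_{v_p(j)=i}\zeta_{p^a}^{jk}$, which produces $0$, $\pm p^{?}$, or $\pm p^{?}(p-1)$ depending on the relation between $v_p(k)$ and $i$. Plugging in the congruences $\phi_i \equiv \phi_{i+1} \pmod p$ (and their refinements obtained by repeating the orbit-counting argument at each level of the tower) confirms integrality of each coefficient and completes the proof.
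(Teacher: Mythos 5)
Your overall strategy (Brauer's characterization plus the observation that $\gamma(g)=\gamma(g_p)$) starts out parallel to the paper's proof, but it breaks down at the reduction step. A $p$-elementary subgroup of $G$ has the form $E=C\times L$ with $C$ cyclic of order prime to $p$ and $L$ an \emph{arbitrary} $p$-subgroup, so after splitting off $C$ you are left with showing that $y\mapsto|N:\C_N(y)|$ is a generalized character of the $p$-group $L$. There is no ``further application of Brauer's characterization'' that reduces this to cyclic subgroups: every subgroup of a $p$-group is itself $p$-elementary, so Brauer's theorem says nothing inside $L$. Nor is the implication ``generalized character on every cyclic subgroup $\Rightarrow$ generalized character'' true for $p$-groups: on $C_2\times C_2=\{1,a,b,ab\}$ the class function with values $1,1,1,-1$ restricts to a genuine character of every cyclic subgroup, yet has multiplicity $-\tfrac{1}{2}$ at one linear character. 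The statement you actually need for non-cyclic $L$ --- that $y\mapsto|N:\C_N(y)|$ is a generalized character of any $p$-group acting on a $p'$-group --- is precisely Knörr's theorem, which the paper invokes as a black box (\cite[Problem~1.13]{Navarro2}) after restricting $\gamma$ to nilpotent subgroups $H=H_p\times H_{p'}$. Your proposal in effect assumes the hard part.

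Even in the cyclic case $L=\langle x\rangle$ with $|x|=p^a$ your congruence is too weak. Writing $\phi_i=|N:\C_N(x^{p^i})|$, the multiplicity of a faithful linear character of $\langle x\rangle$ works out to $p^{-a}(\phi_a-\phi_{a-1})$, so you need $\phi_{a-1}\equiv\phi_a=1\pmod{p^a}$, not merely mod $p$; in general one needs $\phi_i\equiv\phi_{i+1}\pmod{p^{i+1}}$, after which each summand $(\phi_{a-s}-\phi_{a-s-1})p^s$ in your Ramanujan-sum expansion is divisible by $p^a$. The mod-$p$ statement you prove via the action of $\langle x^{p^i}\rangle/\langle x^{p^{i+1}}\rangle\cong\ZZ/p$ cannot be bootstrapped to this; the correct argument is that the full group $\langle x\rangle$ acts on $\C_N(x^{p^{i+1}})\setminus\C_N(x^{p^i})$ with every point stabilizer equal to $\langle x^{p^{i+1}}\rangle$, hence with all orbits of length exactly $p^{i+1}$, giving $|\C_N(x^{p^{i+1}})|\equiv|\C_N(x^{p^i})|\pmod{p^{i+1}}$. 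With that repair the cyclic case goes through, but the non-cyclic case remains the essential gap.
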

\begin{proof}
By Brauer's induction theorem, it suffices to show that the restriction of $\gamma$ to every nilpotent subgroup $H\le G$ is a generalized character of $H$. We write $H=H_p\times H_{p'}$. By a result of Knörr (see \cite[Problem~1.13]{Navarro2}), the restriction $\gamma_{H_p}$ is a generalized character of $H_p$. Hence, also $\gamma_H=\gamma_{H_p}\times 1_{H_{p'}}$ is a generalized character.
\end{proof}

Note that $\Z(G/\pcore_{p'}(G))$ is a $p$-group, since $\pcore_{p'}(G/\pcore_{p'}(G))=1$. In fact, $|\Z(G/\pcore_{p'}(G))|$ is the number of weakly closed elements in a fixed Sylow $p$-subgroup by the $\Z^*$-theorem.
The diagonal monomorphism $G\to\prod_pG/\pcore_{p'}(G)$ embeds $\Z(G)$ into $\prod_p\Z(G/\pcore_{p'}(G))$. Therefore, the following theorem generalizes \autoref{prop}\eqref{divs}.

\begin{Thm}\label{main}
For every prime $p$, $|\Z(G/\pcore_{p'}(G))|$ divides $e'(G)$.
\end{Thm}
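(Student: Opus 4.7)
The plan is to adapt the proof of \autoref{prop} by replacing $\Z(G)$ with the full preimage $Z \le G$ of $\Z(\bar G)$, where $\bar G := G/\pcore_{p'}(G)$. Since $\pcore_{p'}(\bar G) = 1$ forces $\Z(\bar G)$ to be a $p$-group, the theorem splits prime-by-prime; I fix a prime $p$ and set $N := \pcore_{p'}(G)$, so that $Z/N = \Z(\bar G)$ has $p$-power order. Unravelling the definition of $e'(G)$, it suffices to show that for every $\chi \in \Irr(G)$ the integer
\[S_\chi \;:=\; \sum_{g \in G}|G:\C_G(g)|\chi(g) \;=\; |G|^2[\tilde\pi,\chi]\]
is divisible by $|\Z(\bar G)|$ in $\ZZ$.

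Because $Z \trianglelefteq G$, the element $\widehat Z := \sum_{z \in Z}z$ is central in $\CC G$, so by Schur's lemma it acts on the simple $\CC G$-module affording $\chi$ as the scalar $\omega_\chi := \sum_{z \in Z}\chi(z)/\chi(1) \in \mathbf R$. This produces the identity
\[\sum_{z \in Z}\chi(gz) \;=\; \omega_\chi\,\chi(g)\qquad(g \in G),\]
exactly parallel to the central-character identity used in the proof of \autoref{prop}, but now for the larger subgroup $Z$. This identity is the only natural source of the factor of $|\Z(\bar G)| = |Z/N|$ needed in the sum.

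Next, I would group $g \in G$ by cosets of $Z$ in $S_\chi$ and insert the identity above, working modulo a maximal ideal $I$ of $\mathbf R$ containing $p$. The $p'$-contribution coming from $[Z,G]\le N$ (which prevents $|G:\C_G(g)|$ from being constant on $Z$-cosets, unlike in the abelian case treated in \autoref{prop}) is where the Knörr generalised character $\gamma$ of \autoref{knorr} enters: combining $\gamma$ with the coset sum, in the same spirit as in the proof of \autoref{red}, should straighten out the $N$-variation and leave an expression lying in $|\Z(\bar G)|\cdot\mathbf R$. Together with $S_\chi \in \ZZ$ and the fact that $|\Z(\bar G)|$ is a $p$-power, this forces $|\Z(\bar G)| \mid S_\chi$ in $\ZZ$.

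The main obstacle is that the value $|G:\C_G(gz)|$ genuinely depends on $z \in Z$ — its variation is confined to the $p'$-group $N$, yet must be controlled precisely enough to extract the \emph{full} $p$-power $|\Z(\bar G)|$, rather than merely the single prime $p$ produced by \autoref{red}. This is precisely what \autoref{knorr} has been engineered to support, which is why it was stated immediately before the present theorem.
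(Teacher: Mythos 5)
Your proposal is a plan rather than a proof, and the plan has a structural flaw that cannot be repaired along the lines you indicate. You propose to work modulo a maximal ideal $I$ of $\bR$ containing $p$; but $\ZZ\cap I=p\ZZ$, so any congruence obtained this way can only certify that $S_\chi$ is divisible by a \emph{single} factor of $p$, never by the full $p$-power $|\Z(G/\pcore_{p'}(G))|$ (your own final sentence concedes exactly this tension without resolving it, and your claim that the argument will "leave an expression lying in $|\Z(\bar G)|\cdot\bR$" is not something a computation mod $I$ can deliver). The paper obtains the full $p$-power from a genuinely different source: it first splits the sum over $g\in G$ according to the factorization $g=xy$ into $p'$-part $x$ and $p$-part $y$, reduces for each fixed $x$ with $H:=\C_G(x)$ to a sum over the $p$-elements of $H$ using the non-obvious divisibility $z\mid |G:H|z_H$ (where $z_H=|\Z(H/\pcore_{p'}(H))|$), and then applies Frobenius's theorem \cite[Corollary~7.14]{Navarro2}, which says that the sum of a generalized character over $G_p$ is divisible by $|G|_p$ --- that theorem, not an ideal-theoretic congruence, is what produces a $p$-power larger than $p$. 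The Knörr character $\gamma$ of \autoref{knorr} is indeed used, but only to manufacture the generalized character fed into Frobenius's theorem, namely the product of $\gamma$ with the inflation of the class-size character of $G/\pcore_{p'}(G)$ supplied by \autoref{prop}; nothing in your sketch anticipates this construction.

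A second, independent problem is that your Schur's-lemma identity $\sum_{z\in Z}\chi(gz)=\omega_\chi\chi(g)$, while correct, is degenerate and cannot play the role you assign to it. Since $Z\unlhd G$ and $1_Z$ is $G$-invariant, Clifford theory forces $\omega_\chi=|Z|$ when $Z\le\Ker\chi$ and $\omega_\chi=0$ otherwise; moreover the identity cannot be inserted into $S_\chi$ at all, because the weight $|G:\C_G(gz)|$ is not constant on the coset $gZ$ --- precisely the obstacle you name. The assertion that combining $\gamma$ with the coset sum "should straighten out the $N$-variation" is therefore the entire content of the theorem, and it is left unproved.
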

\begin{proof}
Let $N:=\pcore_{p'}(G)$, $z:=|\Z(G/N)|$ and $\chi\in\Irr(G)$. Since every element of $G$ can be factorized uniquely into a $p$-part and a $p'$-part, we obtain
\begin{equation}\label{inner}
|G|^2[\tilde\pi,\chi]=\sum_{x\in G_{p'}}\sum_{y\in\C_G(x)_p}|G:\C_G(xy)|\chi(xy).
\end{equation}
We now fix $x\in G_{p'}$ and $H:=\C_G(x)$. In order to show that the inner sum of \eqref{inner} is divisible by $z$ in $\mathbf{R}$ we may assume that $\chi$ is a character of $H$. After decomposing, we may even assume that $\chi\in\Irr(H)$. Since $x\in\Z(H)$, there exists a root of unity $\zeta$ such that $\chi(xy)=\zeta\chi(y)$ for every $y\in H_p$. Moreover, $\C_G(xy)=\C_G(x)\cap\C_G(y)=\C_H(y)$ yields
\[\sum_{y\in H_p}|G:\C_G(xy)|\chi(xy)=\zeta|G:H|\sum_{y\in H_p}|H:\C_H(y)|\chi(y).\]
Let $N_H:=\pcore_{p'}(H)$, $Z^*/N:=\Z(G/N)$, $Z^*_H/N_H:=\Z(H/N_H)$ and $z_H:=|Z^*_H/N_H|$. 
For $x\in Z^*\cap H$ and $h\in H$ we have $[x,h]\in N\cap H\le N_H$. Hence, $Z^*\cap H\le Z^*_H$ and we obtain
\[|Z^*|=|Z^*H:H||Z^*\cap H|\bigm| |G:H||Z^*_H||N:N_H|=|G:H|z_H|N|,\]
i.\,e. $z$ divides $|G:H|z_H$.
Therefore, it suffices to show that 
\begin{equation}\label{claim}
\sum_{y\in H_p}|H:\C_H(y)|\chi(y)\equiv 0\pmod{z_H}
\end{equation}
(the left hand side is an integer since $H_p$ is closed under Galois conjugation).
To this end, we may assume that $H=G$ and $z_H=z$. By \autoref{prop}, there exists a generalized character $\psi$ of $G/N$ such that 
\[\psi(gN)=|G:Z^*||G/N:\C_{G/N}(gN)|\]
for $g\in G$. We identify $\psi$ with its inflation to $G$. 
For $y\in G_p$ it is well-known that $\C_{G/N}(yN)=\C_G(y)N/N$. Let $\gamma$ be the generalized character defined in \autoref{knorr}. Then 
\[(\psi\gamma)(y)=|G:Z^*||G:\C_G(y)N||N:\C_N(y)|=|G:Z^*||G:\C_G(y)|\]
for every $y\in G_p$. 
By a theorem of Frobenius (see \cite[Corollary~7.14]{Navarro2}),
\[\sum_{y\in G_p}|G:Z^*||G:\C_G(y)|\chi(y)=\sum_{y\in G_p}(\psi\tau\chi)(y)\equiv 0\pmod{|G|_p}.\]
It follows that
\[|G:N|_{p'}\sum_{y\in G_p}|G:\C_G(y)|\chi(y)\equiv 0\pmod{z}\]
and \eqref{claim} holds.
\end{proof}

For any set of primes $\sigma$ it is easy to see that $\Z(G/\pcore_{\sigma'}(G))$ embeds into $\prod_{p\in\sigma}\Z(G/\pcore_{p'}(G))$. Hence, \autoref{main} remains true when $p$ is replaced by $\sigma$. 
The following consequence extends \autoref{nil}.

\begin{Cor}\label{pnil}
If $G$ is $p$-nilpotent and $P\in\Syl_p(G)$, then $e'(G)_p=|\Z(P)|$.
\end{Cor}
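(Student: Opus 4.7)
To start, I would invoke \autoref{main} for the lower bound: since $G$ is $p$-nilpotent, $N := \pcore_{p'}(G)$ is the normal $p$-complement and $G/N \cong P$, so $|\Z(G/\pcore_{p'}(G))| = |\Z(P)|$ divides $e'(G)$, yielding $|\Z(P)| \mid e'(G)_p$.

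For the matching upper bound $e'(G)_p \mid |\Z(P)|$, I would exhibit an irreducible character $\chi$ with $v_p\bigl(|G|^2[\tilde\pi,\chi]\bigr) = v_p(|\Z(P)|)$, which combined with the lower bound would force $e'(G)_p = |\Z(P)|$. The natural candidate is $\chi = 1_G$, so I would analyze $a := \sum_{g \in G}|G:\C_G(g)|$. Using the Sylow-theoretic mod-$p$ reduction from the proof of \autoref{red}, one obtains
\[a \equiv \sum_{g \in N_0}|N_0:\C_{N_0}(g)| \pmod{p},\]
where $N_0 := \N_G(P) = P \times Q$ with $Q := \C_N(P)$ (using $p$-nilpotence). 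By \autoref{nil}(i), the right-hand side factors as $|P|^2[\tilde\pi_P,1_P] \cdot |Q|^2[\tilde\pi_Q,1_Q]$; the proof of \autoref{nil}(ii) shows that $|P|^2[\tilde\pi_P,1_P]/|\Z(P)| \equiv 1 \pmod{p}$, so the first factor has $p$-part exactly $|\Z(P)|$, and the second is coprime to $p$.

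The main obstacle is that this mod-$p$ congruence alone is tight only when $v_p(|\Z(P)|) \leq 1$. When $v_p(|\Z(P)|) \geq 2$, one needs to strengthen it to a congruence modulo $p \cdot |\Z(P)|$, which would allow the division by $|\Z(P)|$ (legitimate since $|\Z(P)| \mid a$ by \autoref{main}) to preserve the nontrivial $p$-residue coming from the normalizer side. To obtain this refinement, I plan to decompose $a = \sum_{[g]}|[g]|^2$ via the Jordan decomposition $g = g_{p'}g_p$: classes with $g_p \notin \Z(P)$ satisfy $v_p(|[g]|^2) \geq 2v_p(|P:\C_P(g_p)|) \geq 2$, while classes with $g_p = y \in \Z(P)$ correspond to $\C_G(y)$-orbits on $\C_N(y)$ and contribute terms involving $|N:\C_N(g)|^2$ whose $p$-residue is governed by the Sylow-style congruence $|N:\C_N(y)| \equiv 1 \pmod{p}$ for $y \in \Z(P)$ (from $y$ acting on $N$ with orbits of $p$-power size). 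Tracking these contributions should yield $v_p(a) = v_p(|\Z(P)|)$ exactly, closing the argument.
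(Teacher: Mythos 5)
Your lower bound is exactly the paper's: since $G/\pcore_{p'}(G)\cong P$, \autoref{main} gives $|\Z(P)|\mid e'(G)$, hence $|\Z(P)|\mid e'(G)_p$. No issue there.

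The upper bound, however, is built on a claim that is false: it is not true that $a:=|G|^2[\tilde\pi,1_G]=\sum_{g\in G}|G:\C_G(g)|$ has $p$-part exactly $|\Z(P)|$ for $p$-nilpotent $G$. Take $G=C_7\rtimes C_9=\langle u\rangle\rtimes\langle v\rangle$ with $vuv^{-1}=u^2$, so that the kernel of the action is $\langle v^3\rangle$. This group is $3$-nilpotent with $P=\Z(P)\cong C_9$, and its conjugacy classes have sizes $1,1,1$, six of size $3$ and six of size $7$; hence $a=3+6\cdot 9+6\cdot 49=351=3^3\cdot 13$, whereas $|\Z(P)|=9$. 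So $v_3(a)=3>2$, and no refinement of the congruences will prove $v_3(a)=2$. (The corollary is of course still true here: the linear character $\chi$ with $\chi(u)=1$, $\chi(v)=e^{2\pi i/3}$ gives $|G|^2[\tilde\pi,\chi]=3+54-147=-90$, whose $3$-part is $9$.) The breakdown occurs exactly at the step you single out as the main obstacle: in this example every class already has its $p$-part conjugate into $\Z(P)$, and the two natural blocks of terms, $3+6\cdot 9=57=3\cdot 19$ and $6\cdot 49=2\cdot 3\cdot 7^2$, each have $3$-adic valuation $1$ as your local analysis predicts, yet their sum has valuation $3$. Individual class contributions behave as you describe, but their sum is not controlled by them, so "tracking the contributions" cannot close the argument for $\chi=1_G$. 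The paper sidesteps the choice of a witness character entirely: assuming $\frac1p|G:\Z(P)||G|\tilde\pi$ were a generalized character $\gamma$, it multiplies by the permutation character $1_P^G$ (whose values on $x\in P$ are $|\C_N(x)|$, using $\C_G(x)=\C_P(x)\C_N(x)$) and restricts to $P$, obtaining a putative generalized character $\mu$ of $P$ with $[p\mu,1_P]\equiv|N|^2\not\equiv 0\pmod p$ by the computation already done in \autoref{nil} — a contradiction. In effect the witness is a constituent of $1_P^G\overline{1_P^G}$ rather than $1_G$, and the multiplication by $1_P^G$ is precisely what removes the interference from the $p'$-classes that derails the direct computation of $a$. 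If you want to salvage your approach you must either identify such a character explicitly or adopt a contradiction argument of this kind.
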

\begin{proof}
Let $N:=\pcore_{p'}(G)$. Since $G/N\cong P$, \autoref{main} shows that $|\Z(P)|$ divides $e'(G)$. 
For the converse relation, we suppose by way of contradiction that the map 
\[\gamma:G\to\CC,\qquad g\mapsto\frac{1}{p}|G:\Z(P)||G:\C_G(g)|\] 
is a generalized character of $G$. For $x\in P$ we observe that $\C_G(x)=\C_P(x)\C_N(x)$. Hence,
\[(1_P)^G(x)=\frac{1}{|P|}\sum_{\substack{g\in G\\x^g\in P}}1=\frac{1}{|P|}|\C_G(x)||P:\C_P(x)|=|\C_N(x)|.\]
Consequently, $\mu:=(\gamma 1_P^G)_P$ is a generalized character of $P$ such that
\[\mu(x)=\frac{1}{p}|P:\Z(P)||P:\C_P(x)||N|^2\]
for $x\in P$. In the proof of \autoref{nil} we have seen however that
\[[p\mu,1_P]\equiv |N|^2\not\equiv 0\pmod{p}.\]
This contradiction shows that $e'(G)_p$ divides $|\Z(P)|$.
\end{proof}

Next we prove a partial converse of \autoref{pnil}.

\begin{Thm}\label{pnilabel}
For every prime $p$ we have $e(G)_p=1$ if and only if $|G'|_p=1$. In particular, $G$ is abelian if and only if $e(G)=1$.
\end{Thm}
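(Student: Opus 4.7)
The plan is to prove the two directions separately; the ``in particular'' statement then follows by quantifying over all primes.

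For the direction $|G'|_p = 1 \Rightarrow e(G)_p = 1$: if $|G'|_p = 1$ then $G'$ is a normal $p'$-subgroup, so $G' \le \pcore_{p'}(G)$ and $G/\pcore_{p'}(G)$ is abelian. Since $\pcore_{p'}(G/\pcore_{p'}(G)) = 1$, this abelian quotient must be a $p$-group, so $G$ is $p$-nilpotent with abelian Sylow $p$-subgroup $P$ isomorphic to $G/\pcore_{p'}(G)$. \autoref{pnil} then gives $e'(G)_p = |\Z(P)| = |P| = |G|_p$, hence $e(G)_p = 1$.

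For the converse $e(G)_p = 1 \Rightarrow |G'|_p = 1$, the hypothesis reads $|G|_p \mid e'(G)$, and the goal is equivalent to $G$ being $p$-nilpotent with abelian Sylow $p$-subgroup $P$. Granted $p$-nilpotency, \autoref{pnil} at once yields $|\Z(P)| = e'(G)_p = |G|_p = |P|$, so $P$ is abelian; the real content is thus establishing $p$-nilpotency. I would argue by contradiction: if $G$ is not $p$-nilpotent, then by Frobenius's normal $p$-complement theorem neither is $N := \N_G(P)$, and writing $N = P \rtimes H$ via Schur--Zassenhaus we have $[P,H] \ne 1$. The aim is to produce an irreducible character $\chi$ of $G$ for which $|G|_p$ fails to divide $\sum_{g \in G} |G{:}\C_G(g)|\,\overline{\chi(g)}$, contradicting $|G|_p \mid e'(G)$. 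A natural candidate is to start from a character of $N$ (or of $\pcore_{p'}(N)$) that witnesses the non-trivial action of some $p'$-element of $H$ on $P$ and then induce up, in the spirit of the $\mu = (\gamma\cdot 1_P^G)_P$ construction at the end of the proof of \autoref{pnil}: the non-trivial $H$-action should produce a $P$-orbit of $p$-power size strictly less than $|P|$ whose contribution to the character sum carries the wrong $p$-valuation. The ``in particular'' statement then follows by intersecting over all primes: $G$ is abelian iff $|G'|_p = 1$ for every $p$ iff $e(G)_p = 1$ for every $p$, i.e.\ iff $e(G) = 1$.

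The main obstacle is this last construction. \autoref{red} is only sensitive to divisibility of $e'(G)$ by a single $p$, because in its proof all $P$-orbits on $G$ of size divisible by $p$ are discarded and only the fixed-point contribution from $\C_G(P)$ is tracked. In order to detect the full $p$-part of $e'(G)$---and in particular the gap between $|\Z(P)|$ and $|P|$ or between $|Z(G/\pcore_{p'}(G))|$ and $|G|_p$---one must retain the finer stratification of $G$ by $P$-orbit sizes and arrange the character sum so that a non-central action of $H$ on $P$ is visible as a congruence modulo $|G|_p$ rather than merely modulo $p$. This refinement is exactly what turns the qualitative criterion of \autoref{red} into the sharp equivalence claimed here.
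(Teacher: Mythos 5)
Your forward direction is correct and is essentially the paper's: $|G'|_p=1$ makes $G/\pcore_{p'}(G)$ an abelian $p$-group of order $|G|_p$, and either \autoref{main} or \autoref{pnil} then gives $|G|_p\mid e'(G)$, i.e.\ $e(G)_p=1$.

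The converse, however, is not proved. You correctly reduce it to showing that $e(G)_p=1$ forces $G$ to be $p$-nilpotent with abelian Sylow $p$-subgroup, but the construction that is supposed to deliver this is only sketched, and you yourself flag it as ``the main obstacle''. That is a genuine gap, and the sketch also opens with a false step: Frobenius's normal $p$-complement theorem asserts that a non-$p$-nilpotent group has \emph{some} $p$-subgroup $Q$ with $\N_G(Q)$ not $p$-nilpotent, not that $\N_G(P)$ itself fails to be $p$-nilpotent for $P\in\Syl_p(G)$ (take $G=S_4$, $p=2$, where $\N_G(P)=P$). Your diagnosis at the end is accurate --- \autoref{red} only detects divisibility by $p$, not by $|G|_p$ --- but the missing ingredient is not a refined local stratification; the paper instead argues globally. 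From $e(G)_p=1$ the map $\psi(g)=|G|_{p'}|G:\C_G(g)|$ is a generalized character. For each of the $k$ representatives $x_1,\dots,x_k\in P$ of the classes of $p$-elements one has $\psi(x_i)\equiv\psi(1)=|G|_{p'}\not\equiv 0\pmod p$, hence $\psi(x_i)^m\equiv 1\pmod{|P|}$ with $m:=\phi(|P|)$. Applying Frobenius's divisibility theorem \cite[Corollary~7.14]{Navarro2} to the generalized character $\psi^{m-1}$ summed over $G_p$ yields $k\equiv\sum_{i}\psi(x_i)^m\equiv 0\pmod{|P|}$, and since $1\le k\le|P|$ this forces $k=|P|$: the elements of $P$ are pairwise non-conjugate in $G$, so $P$ is abelian and $\N_G(P)=\C_G(P)$, whence Burnside's transfer theorem gives $p$-nilpotency and $|G'|_p=1$. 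Note that this obtains abelianness of $P$ and $p$-nilpotency simultaneously, rather than in the order your plan requires.
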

\begin{proof}
If $|G'|_p=1$, then $G/\pcore_{p'}(G)$ is abelian and $e(G)_p=1$ by \autoref{main}. Suppose conversely that $e(G)_p=1$. Then the map $\psi$ with $\psi(g):=|G|_{p'}|G:\C_G(g)|$ for $g\in G$ is a generalized character of $G$.
Let $P$ be a Sylow $p$-subgroup of $G$. Choose representatives $x_1,\ldots,x_k\in P$ for the conjugacy classes of $p$-elements of $G$. Then $\psi(x_i)\equiv\psi(1)\equiv |G|_{p'}\not\equiv 0\pmod{p}$ by \cite[Lemma~4.19]{Navarro2} and $\psi(x_i)^m\equiv 1\pmod{|P|}$ where $m:=\phi(|P|)$ (Euler's totient function). The theorem of Frobenius we have used earlier (see \cite[Corollary~7.14]{Navarro2}) yields
\[k\equiv \sum_{i=1}^k\psi(x_i)^m=|G|_{p'}\sum_{g\in G_p}\psi(g)^{m-1}\equiv 0\pmod{|P|}.\]
In particular, $|P|\le k\le|P|$ and $|P|=k$. It follows that $P$ is abelian and $G$ is $p$-nilpotent by Burnside's transfer theorem. Hence, $G/\pcore_{p'}(G)$ is abelian and $|G'|_p=1$.
\end{proof}

It is clear that $e(G)$ can be computed from the character table of $G$. There is in fact an interesting interpretation:

\begin{Prop}\label{chartable}
Let $X$ be the character table of $G$ and let $Y:=\overline{X}X^\mathrm{t}$. 
Then the following holds:
\begin{enumerate}[(i)]
\item $Y$ is a symmetric, non-negative integral matrix.
\item The eigenvalues of $Y$ are $|\C_G(g)|$ where $g$ represents the distinct conjugacy classes of $G$.
\item $e(G)|G|$ is the largest elementary divisor of $Y$.
\end{enumerate}
\end{Prop}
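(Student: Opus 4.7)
My strategy is to identify the entries of $Y$ and $Y^{-1}$ with character inner products involving the conjugation character $\pi(g)=|\C_G(g)|$ and its reciprocal $\tilde\pi$. Part (ii) follows from the column orthogonality relation $X^t\overline X=D$, where $D:=\diag(|\C_G(g_1)|,\ldots,|\C_G(g_k)|)$: since $Y=\overline X X^t$ and $D=X^t\overline X$ are products of the same two matrices in opposite order, they share characteristic polynomials, hence eigenvalues. For (i), I would expand
\[Y_{ij}=\sum_k\overline{\chi_i(g_k)}\chi_j(g_k)\]
and match it with the inner product $[\chi_j\pi,\chi_i]$ by splitting the defining sum into conjugacy classes and using $|K_k|\,|\C_G(g_k)|=|G|$ for the class size $|K_k|$. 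This identifies $Y_{ij}$ with the multiplicity of $\chi_i$ in the ordinary character $\chi_j\pi$, giving non-negative integrality. Symmetry $Y_{ij}=Y_{ji}$ is then immediate because $\pi$ is real-valued, so $[\chi_j\pi,\chi_i]$ and $[\chi_i\pi,\chi_j]$ are complex conjugates; both being non-negative integers they coincide.

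For (iii), I would invert $Y$ explicitly. From $X^t\overline X=D$ one extracts $(X^t)^{-1}=\overline X D^{-1}$ and $\overline X^{-1}=D^{-1}X^t$, so $Y^{-1}=\overline X D^{-2}X^t$ and
\[(Y^{-1})_{ij}=\sum_k\frac{\overline{\chi_i(g_k)}\chi_j(g_k)}{|\C_G(g_k)|^2}=[\tilde\pi,\chi_i\overline{\chi_j}]\]
by an analogous class-sum calculation. The largest elementary divisor of $Y$ is by definition the smallest positive integer $m$ with $mY^{-1}\in\ZZ^{k\times k}$, so it remains to show this $m$ equals $e(G)|G|$. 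Specializing $i=1$ in $mY^{-1}\in\ZZ^{k\times k}$ forces $m[\tilde\pi,\overline{\chi_j}]\in\ZZ$ for every $j$; as $\overline{\chi_j}$ runs through $\Irr(G)$ this says $m\tilde\pi$ is a generalized character, so $e(G)|G|$ divides $m$. Conversely, if $m\tilde\pi$ is a generalized character then, using that each $\chi_i\overline{\chi_j}$ is an ordinary character expanding as a non-negative integer combination of irreducibles, $m[\tilde\pi,\chi_i\overline{\chi_j}]\in\ZZ$ for all $i,j$, yielding the reverse divisibility.

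The main obstacle is the final equivalence in (iii): one must both spot the identification $(Y^{-1})_{ij}=[\tilde\pi,\chi_i\overline{\chi_j}]$ and recognize that integrality of the \emph{entire} matrix $mY^{-1}$ is equivalent to integrality of the coefficients of $m\tilde\pi$ in $\Irr(G)$. This equivalence reduces to the nearly tautological observations that the row $i=1$ already encodes every coefficient $[\tilde\pi,\overline{\chi_j}]$ of $\tilde\pi$, and that the other rows impose no new constraint because $\chi_i\overline{\chi_j}$ is always a genuine character.
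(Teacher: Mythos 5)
Your proof is correct and follows essentially the same route as the paper: identifying $Y_{ij}$ with the non-negative integer $[\pi,\chi_i\overline{\chi_j}]$, diagonalizing $Y$ via the second orthogonality relation, and computing $(Y^{-1})_{ij}=[\tilde\pi,\chi_i\overline{\chi_j}]$ so that the largest elementary divisor is the least $m$ with $m\tilde\pi$ a generalized character. The only cosmetic difference is that you single out the row $i=1$ where the paper uses the column $j=1$; both rest on the same standard fact that the largest elementary divisor of an integral matrix $Y$ is the least $m$ with $mY^{-1}$ integral.
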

\begin{proof}
Let $\Irr(G)=\{\chi_1,\ldots,\chi_k\}$. Let $g_1,\ldots,g_k\in G$ be representatives for the conjugacy classes of $G$. 
\begin{enumerate}[(i)]
\item The entry of $Y$ at position $(i,j)$ is
\[\sum_{l=1}^k\overline{\chi_i(g_l)}\chi_j(g_l)=\frac{1}{|G|}\sum_{g\in G}|\C_G(g)|\overline{\chi_i(g)}\chi_j(g)=[\pi,\chi_i\overline{\chi_j}]\ge 0.\]
Now by definition, $Y$ is symmetric.

\item By the second orthogonality relation,
\[\overline{X}^{-1}Y\overline{X}=X^\text{t}\overline{X}=\diag(|\C_G(g_1)|,\ldots,|\C_G(g_k)|).\]

\item It suffices to show that $e(G)|G|$ is the smallest positive integer $m$ such that $mY^{-1}$ is an integral matrix. By the orthogonality relations, $X^{-1}=\bigl(|\C_G(g_i)|^{-1}\overline{\chi_j(g_i)}\bigr)_{i,j=1}^k$. Therefore,
\begin{align*}
Y^{-1}&=(X^{\mathrm{t}})^{-1}\overline{X}^{-1}=\Bigl(\sum_{l=1}^k|\C_G(g_l)|^{-2}\overline{\chi_i(g_l)}\chi_j(g_l)\Bigr)_{i,j}=\Bigl(\frac{1}{|G|}\sum_{l=1}^k|G:\C_G(g_l)|\tilde\pi(g_l)\overline{\chi_i(g_l)}\chi_j(g_l)\Bigr)_{i,j}\\
&=\Bigl(\frac{1}{|G|}\sum_{g\in G}\tilde\pi(g)\overline{\chi_i(g)}\chi_j(g)\Bigr)_{i,j}=\bigl([\tilde\pi,\chi_i\overline{\chi_j},]\bigr)_{i,j}.
\end{align*}
Clearly, $m[\tilde\pi,\chi_i\overline{\chi_j}]$ is an integer for all $i,j$ if and only if $m[\tilde\pi,\chi_i]$ is an integer for $i=1,\ldots,k$. The claim follows.\qedhere
\end{enumerate}
\end{proof}

\section{Examples}

\begin{Prop}
There exist non-trivial groups $G$ such that $e'(G)=1$.
\end{Prop}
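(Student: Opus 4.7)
My plan is to exhibit a concrete group $G$ with $e'(G)=1$. Any candidate must satisfy the necessary conditions already established: $|G|$ odd by \autoref{prop}\eqref{two}, $\Z(G)=1$ by \autoref{prop}\eqref{divs}, and $\Z(G/\pcore_{p'}(G))=1$ for every prime $p$ dividing $|G|$ by \autoref{main}. The smallest setting where all three conditions can be met nontrivially appears to be a $\{3,5\}$-group, so I would look for $G$ of order $3^a5^b$ with both $\pcore_3(G)$ and $\pcore_5(G)$ nontrivial and with both quotients $G/\pcore_3(G)$ and $G/\pcore_5(G)$ having trivial centre.

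These constraints are quite restrictive. For instance, if a Sylow $5$-subgroup $Q$ were normal in $G$, then $G/Q$ would be a nontrivial $3$-group, hence would have nontrivial centre, contradicting the condition at $p=5$; so neither Sylow subgroup can be normal. To enforce the vanishing centres I would build into the group coprime, fixed-point-free actions in both directions: a cyclic group of order $5$ acting on $\FF_3^4$ through $\FF_{81}^\times$ (possible since $5\mid 3^4-1$), and dually a cyclic group of order $3$ acting on $\FF_5^4$ through $\FF_{625}^\times$. Assembling these into an iterated semidirect product and padding up to order $3^9 5^5$ should yield a group in which, for each $p\in\{3,5\}$, some $p'$-element acts without fixed points on $\Z(\pcore_p(G))$; combined with the $p$-constrained structure of $G/\pcore_{p'}(G)$ (whose centre is trapped in $\pcore_p(G/\pcore_{p'}(G))$), this forces $\Z(G/\pcore_{p'}(G))=1$.

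The genuine difficulty is verifying $e'(G)=1$ itself, since the three necessary conditions above are not a priori sufficient. The cleanest way to check this is via part (iii) of \autoref{chartable}: compute the integral matrix $Y=\overline{X}X^{\mathrm{t}}$ from the character table of $G$ and check that its largest elementary divisor equals $|G|^2$. Equivalently, one computes the rational values $|G|^2[\tilde\pi,\chi]$ for $\chi\in\Irr(G)$ and confirms that their greatest common divisor is exactly $|G|$. Given the order $3^95^5$, this is a finite computation accessible to a computer algebra system, and the construction may need to be refined until the calculation returns the expected value. This last step is the main obstacle: the necessary conditions alone do not pin down $e'(G)$, so the particular group has to be chosen carefully and the resulting character-theoretic computation carried out explicitly.
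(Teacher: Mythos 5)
Your proposal correctly identifies the necessary conditions ($|G|$ odd, $\Z(G/\pcore_{p'}(G))=1$ for all $p$) and the right shape of construction — a $\{3,5\}$-group assembled from coprime actions that kill the centres of both quotients $G/\pcore_{3'}(G)$ and $G/\pcore_{5'}(G)$; this matches the paper, which takes $G=(A\times B)\rtimes C$ with $A\cong C_9^4$, $B\cong C_{25}^2$, $C\cong C_{15}$, of order $3^95^5$. But the proof is not complete: you explicitly defer the decisive step ("the construction may need to be refined until the calculation returns the expected value"), and the route you propose for it — computing the full character table of a group of order $3^95^5$ and taking the gcd of all the integers $|G|^2[\tilde\pi,\chi]$ — is both unperformed and unnecessary. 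As you yourself note, the necessary conditions do not suffice, so without the verification nothing has been proved.

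The idea you are missing is that one never needs the character table, nor any character other than $1_G$. Since $e'(G)=1$ holds as soon as, for each prime $p\mid |G|$, \emph{some} $\chi$ satisfies $p\nmid |G|^2[\tilde\pi,\chi]$, it is enough to test $\chi=1_G$ at $p=3$ and $p=5$. Letting $P$ be a Sylow $p$-subgroup act on $G$ by conjugation, the function $g\mapsto|G:\C_G(g)|$ is constant on orbits, every non-fixed orbit has length divisible by $p$, and the fixed points are exactly $\C_G(P)$; hence
\[|G|^2[\tilde\pi,1_G]=\sum_{g\in G}|G:\C_G(g)|\equiv\sum_{g\in\C_G(P)}|G:\C_G(g)|\pmod{p}.\]
In the paper's group $\C_G(P)$ and $\C_G(Q)$ are small elementary abelian subgroups whose class lengths are computed by hand, giving $1+80\cdot 5\equiv -1\pmod 3$ and $1+24\cdot 3\equiv -2\pmod 5$, whence $e(G)=|G|$ and $e'(G)=1$. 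This local congruence (the same device as in \autoref{red}) is what turns your plan into a proof; without it, or without actually executing the computation you describe, the argument has a genuine gap.
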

\begin{proof}
By \autoref{prop} and \autoref{main} we need a group of odd order such that $\Z(G/\pcore_{p'}(G))=1$ for every prime $p$.
Let $A:=\langle a_1,\ldots,a_4\rangle\cong C_9^4$, $B:=\langle b_1,b_2\rangle\cong C_{25}^2$ and $C:=\langle c\rangle\cong C_{15}$. We define an action of $C$ on $A\times B$ via
\begin{align*}
a_1^c&=a_2^4,&a_2^c&=a_3^4,&a_3^c&=a_4^4,\\
a_4^c&=(a_1a_2a_3a_4)^{-4},&b_1^c&=b_2^6,&b_2^c&=(b_1b_2)^{-6}.
\end{align*}
Note that the action of $c$ on $A$ is the composition of the companion matrix of $X^4+X^3+X^2+X+1$ and the power map $a\mapsto a^4$. In particular, $c^5$ induces an automorphism of order $3$ on $A$. Similarly, $c^3$ induces an automorphism of order $5$ on $B$. 
Now let $G:=(A\times B)\rtimes C$. Then $P:=\langle a_1,\ldots,a_4,c^5\rangle$ is a Sylow $3$-subgroup of $G$ and $Q:=\langle b_1,b_2,c^3\rangle$ is a Sylow $5$-subgroup. It is easy to see that $\C_G(P)=\langle a_1^3,\ldots,a_4^3\rangle$ and $\C_G(Q)=\langle b_1^5,b_2^5\rangle$. 
By the conjugation action of $P$ (resp. $Q$) on $G$, we obtain
\begin{align*}
|G|^2[\tilde\pi,1_G]&=\sum_{g\in G}|G:\C_G(g)|\equiv\sum_{g\in\C_G(P)}|G:\C_G(g)|=1+80\cdot 5\equiv -1\pmod{3}\\
|G|^2[\tilde\pi,1_G]&=\sum_{g\in G}|G:\C_G(g)|\equiv\sum_{g\in\C_G(Q)}|G:\C_G(g)|=1+24\cdot 3\equiv -2\pmod{5}.
\end{align*}
Therefore, $e(G)=|G|$ and $e'(G)=1$.
\end{proof}

Our next example shows that there are non-nilpotent groups $G$ such that $e'(G)=|\Z(G)|$ (take $n=12$ for instance).

\begin{Prop}
Let $G=D_{2n}$ be the dihedral group of order $2n\ge 4$. Then
\[e'(G)=\begin{cases}
4&\text{if }n\equiv 2\pmod{4},\\
2&\text{otherwise}.
\end{cases}\]
\end{Prop}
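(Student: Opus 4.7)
The plan is to compute the scalar products $[\tilde\pi,\chi]$ directly from the character table of $D_{2n}$, after reducing the case $n\equiv 2\pmod{4}$ to the odd case via a direct product decomposition. The key observation I would use is that, for every $\chi\in\Irr(G)$,
\[N_\chi:=|G|^2[\tilde\pi,\chi]=\sum_{C}|C|^2\overline{\chi(g_C)}\]
is an integer (the sum runs over conjugacy classes $C$ with representative $g_C$), so the minimality of $e(G)$ and an elementary gcd manipulation give
\[e'(G)=\gcd\!\Bigl(|G|,\;\gcd_{\chi\in\Irr(G)}N_\chi\Bigr).\]
Thus the task reduces to computing each $N_\chi$ and taking a $\gcd$ with $|G|$.

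First I would dispatch the case $n=2m$ with $m$ odd. Here $\Z(G)=\langle r^m\rangle$ has order $2$, and the subgroup $H=\langle r^2,s\rangle\cong D_{2m}$ meets $\langle r^m\rangle$ trivially (because $m$ is odd), so $G\cong D_{2m}\times C_2$. By \autoref{nil}(i) this gives $e(G)=e(D_{2m})e(C_2)=e(D_{2m})$, hence $e'(G)=2\,e'(D_{2m})$; once the formula for odd $n$ is proved, this yields $e'(G)=4$.

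For $n$ odd I would use the standard character table: the classes have sizes $1$, $2$ (the $(n-1)/2$ rotation classes), and $n$ (the single reflection class), and the irreducibles are $1_G$, the sign character $\epsilon$, and degree-$2$ characters $\psi_j$ vanishing on reflections with $\psi_j(r^k)=2\cos(2\pi jk/n)$. The identity $\sum_{k=1}^{n-1}\cos(2\pi jk/n)=-1$ yields $N_{\psi_j}=-2$, while direct computation gives $N_{1_G}=n^2+2n-1$ and $N_\epsilon=-(n-1)^2$, both even. Since $-2$ is among the $N_\chi$'s and every $N_\chi$ is even, $e'(G)=\gcd(2n,2)=2$. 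For $n\equiv 0\pmod{4}$ two further linear characters $\alpha,\beta$ sending $r\mapsto-1$ arise; because $(-1)^{n/2}=1$ and $\sum_{k=1}^{n/2-1}(-1)^k=-1$, they produce $N_\alpha=N_\beta=-2$, again forcing $e'(G)=2$ (the remaining $N_{\psi_j}$ are $0$ or $-4$, and $N_{1_G},N_\epsilon$ are even).

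The main obstacle will be the careful bookkeeping of class sizes and character values across the three parity regimes, and verifying the small cases $n=2,4$ separately; however, all the trigonometric sums should collapse uniformly to $\sum_{k=0}^{n-1}e^{2\pi ijk/n}=0$ for $1\le j<n$.
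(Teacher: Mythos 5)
Your argument is correct, but it takes a genuinely different route from the paper. The paper gets the $2$-part for free from the local machinery: $D_{2n}$ is $2$-nilpotent, so \autoref{pnil} gives $e'(G)_2=|\Z(P)|$ for a Sylow $2$-subgroup $P$ (which is $C_2$, a Klein four group, or a dihedral $2$-group of order at least $8$ according to $n\bmod 4$), and then only the single quantity $N_{1_G}=\sum_{g}|G:\C_G(g)|$ is computed from the class equation and checked to be coprime to the odd part of $n$. You instead compute the full vector of integers $N_\chi$ from the explicit character table and exploit the identity $e'(G)=\gcd\bigl(|G|,\gcd_\chi N_\chi\bigr)$ (which is a correct and useful reformulation of the minimality of $e(G)$), locating a character with $N_\chi=-2$ in each case; the case $n\equiv 2\pmod 4$ is dispatched via $D_{4m}\cong D_{2m}\times C_2$ and the multiplicativity from \autoref{nil}. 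Your trigonometric evaluations check out ($N_{\psi_j}=-2$ for odd $n$; $N_{\psi_j}\in\{0,-4\}$ and $N_\alpha=N_\beta=-2$ for $4\mid n$), and the evenness of all $N_\chi$, which you verify by hand, is also guaranteed a priori by \autoref{prop}(ii). What the paper's route buys is independence from the character table (only the class equation enters) and a demonstration of the local theory; what your route buys is a self-contained, elementary computation that moreover exhibits explicit constituents witnessing the value $2$. One point to be careful about in a final write-up: make sure the direct-product reduction is only invoked for $m\ge 3$ (or note that $D_{2m}=C_2$ for $m=1$ still satisfies $e'=2$), and that the degenerate abelian cases $n=2,4$ are covered, as you indicate.
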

\begin{proof}
As $G$ is $2$-nilpotent, \autoref{main} shows that $e'(G)_2=4$ if $n\equiv 2\pmod{4}$ and $e'(G)_2=2$ otherwise.
Moreover,
\[|G|^2[\tilde\pi,1_G]=\sum_{g\in G}|G:\C_G(g)|=\begin{cases}
n^2+2n-1&\text{if }2\nmid n,\\
\frac{1}{2}n^2+2n-2&\text{if }2\mid n. 
\end{cases}\]
Since the two numbers on the right hand side have no odd divisor in common with $n$, it follows that $e'(G)_{2'}=1$.  
\end{proof}

For many simple groups it turns out that $e'(G)=2$. 

\begin{Prop}\label{SL2}
For every prime power $q>1$ we have
\begin{align*}
e'(\GL_2(q))&=\begin{cases}
q-1&\text{if }2\nmid q,\\
2(q-1)&\text{if }2\mid q.
\end{cases}\\
e'(\SL_2(q))&=e'(\PSL_2(q))=\begin{cases}
2&\text{if }3\nmid q,\\
6&\text{if }3\mid q.
\end{cases}
\end{align*}
\end{Prop}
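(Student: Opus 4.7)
My plan is to sandwich $e'(G)$ between the lower bound supplied by the general theorems already established and an upper bound obtained from evaluating a handful of character sums. By definition, $e'(G)$ is the largest divisor of $|G|$ that divides $|G|^2[\tilde\pi,\chi]=\sum_{x\in\Gamma}|G:\C_G(x)|^2\chi(x)$ for every $\chi\in\Irr(G)$.

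For the lower bounds: the centre of $\GL_2(q)$ is the group of scalar matrices, of order $q-1$, so \autoref{prop}\eqref{divs} gives $(q-1)\mid e'(\GL_2(q))$; for even $q$, $q-1$ is odd and \autoref{prop}\eqref{two} provides the extra factor of $2$. For $\SL_2(q)$ and $\PSL_2(q)$, \autoref{prop}\eqref{two} gives $2\mid e'$. The harder divisibility $3\mid e'(G)$ when $3\mid q$ I would obtain from the final clause of \autoref{red} with $p=3$. Take $P$ to be the upper unipotent Sylow $3$-subgroup (elementary abelian of order $q$) and $N=\N_G(P)=P\rtimes T$ the Borel. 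In both $\SL_2(q)$ and $\PSL_2(q)$ the group $\pcore_{3'}(N)$ is contained in the centre of $G$, so $\C_N(x)=N$ for every $x\in\pcore_{3'}(N)$. A short calculation (using that $T$ acts on $P\cong\mathbb{F}_q^+$ by multiplication by squares, so the stabiliser of a nonzero $y\in P$ has order $\gcd(q-1,2)$) gives $|N:\C_N(y)|=(q-1)/2$ for each nonzero $y\in P$, so
\[\sum_{y\in P}|N:\C_N(y)|=1+\tfrac{(q-1)^2}{2}\equiv 1+2\equiv 0\pmod 3\]
whenever $q\equiv 0\pmod 3$. Therefore $3\mid e'(G)$.

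For the upper bounds I would invoke the Jordan--Schur--Frobenius classification of the irreducible characters of $\GL_2(q)$, $\SL_2(q)$ and $\PSL_2(q)$: they fall into a few families (linear/$\det$-twist, twisted Steinberg, principal series, cuspidal/discrete series, and for $\SL_2(q)$ with $q$ odd two half-discrete characters). For each group I would evaluate $|G|^2[\tilde\pi,\chi]$ on $\chi=1_G$ and on one representative from each remaining family, obtaining explicit polynomials in $q$. Their gcd with $|G|$ should collapse exactly to the stated $q-1$, $2(q-1)$, $2$ or $6$. The equality $e'(\SL_2(q))=e'(\PSL_2(q))$ then drops out because $\Irr(\PSL_2(q))$ consists exactly of the characters of $\SL_2(q)$ trivial on $\Z(\SL_2(q))$, and the remaining characters of $\SL_2(q)$ contribute sums already divisible by the claimed $e'$.

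The main obstacle is the character-sum arithmetic for the upper bound: the polynomials in $q$ are products of $q$, $q-1$ and $q+1$ in various combinations, and one must track the $2$-, $3$- and general prime valuations carefully---ideally one prime at a time, using the mod-$I$ reductions familiar from the proofs of \autoref{prop}\eqref{two} and \autoref{main}---to confirm that their gcd with $|G|$ really collapses to the small stated value.
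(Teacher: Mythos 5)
Your lower bounds are correct and coincide with the paper's: $|\Z(\GL_2(q))|=q-1$ plus \autoref{prop}\eqref{two} for the $\GL_2$ case, and the final clause of \autoref{red} applied to the Borel subgroup for $3\mid e'(\SL_2(q))$ when $3\mid q$ (your computation $1+\tfrac{(q-1)^2}{2}\equiv 0\pmod 3$ is exactly the paper's). The genuine gap is the upper bound, which is where the content of the proposition lies: you assert that the gcd of the values $|G|^2[\tilde\pi,\chi]$ with $|G|$ ``should collapse'' to the stated number and explicitly flag the arithmetic as an unresolved obstacle, so nothing is actually proved. The missing idea is twofold. First, the trivial character alone suffices: $|G|^2[\tilde\pi,1_G]=\sum_{x\in\Gamma}|G:\C_G(x)|^2$ is computable from the class equation, with no need for the character-family bookkeeping you propose. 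Second, to certify the gcd uniformly in $q$ one exhibits an explicit B\'ezout identity in $\ZZ[q]$; e.g.\ for $\SL_2(q)$ with $q$ even the class equation gives $|G|^2[\tilde\pi,1_G]=q^5-q^3-3q^2+2$, and the identity $(q^5-q^3-3q^2+2)(1-3q^2)+(q^3-q)(3q^4-q^2-9q)=2$ forces $\gcd(|G|^2[\tilde\pi,1_G],|G|)\le 2$ for every such $q$ at once. Without something of this kind your ``track the valuations prime by prime'' plan has no mechanism for handling all prime divisors of $q\pm 1$ simultaneously.

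A secondary error: the claim that $e'(\PSL_2(q))$ ``drops out'' of the $\SL_2(q)$ computation because $\Irr(\PSL_2(q))\subseteq\Irr(\SL_2(q))$ does not work. The class function $\tilde\pi$ for $\PSL_2(q)$ is built from centralizer orders in $\PSL_2(q)$, which are not obtained by restricting or deflating the corresponding function on $\SL_2(q)$; there is no general relation between $e'(G)$ and $e'(G/\Z(G))$ in the paper (indeed \autoref{prop}\eqref{divs} shows $|\Z(G)|\mid e'(G)$, which already changes the answer by the factor $|\Z(G)|=2$ here, and the interaction at the prime $2$ is delicate). The paper treats $\PSL_2(q)$ for odd $q$ as a separate class-equation computation requiring a case distinction between $q\equiv\pm 1\pmod 4$.
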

\begin{proof}
Suppose first that $G=\GL_2(q)$. By \autoref{prop}, $e'(G)$ is divisible by $|\Z(G)|=q-1$ and by $2(q-1)$ if $q$ is even. The class equation of $G$ is
\[(q^2-1)(q^2-q)=|G|=(q-1)\times 1+\frac{q^2-q}{2}\times (q^2-q)+(q-1)\times(q^2-1)+\frac{(q-1)(q-2)}{2}\times(q^2+q).\]
It follows that
\[
|G||G:\Z(G)|[\tilde\pi,1_G]=1+\frac{(q^2-q)^2}{2}q+(q^2-1)^2+\frac{(q^2+q)^2}{2}(q-2)=q^5-q^3-3q^2+2.
\]
Since 
\begin{equation}\label{coin}
(q^5-q^3-3q^2+2)(1-3q^2)+(q^3-q)(3q^4-q^2-9q)=2,
\end{equation}
we have $\gcd(|G||G:\Z(G)|[\tilde\pi,1_G],|G:\Z(G)|)\le2$ and $e'(G)\le 2(q-1)$. If $q$ is even, we obtain $e'(G)=2(q-1)$ as desired. If $q$ is odd, then $q^5-q^3-3q^2+2$ is odd. Hence, $e'(G)=q-1$ in this case.

Next we assume that $q$ is even and $G=\SL_2(q)=\PSL_2(q)$. The class equation of $G$ is
\[q^3-q=|G|=1\times 1+1\times (q^2-1)+\frac{q}{2}\times q(q-1)+\frac{q-2}{2}\times q(q+1).\]
It follows that
\[
|G|^2[\tilde\pi,1_G]=1+(q^2-1)^2+\frac{q}{2}q^2(q-1)^2+\frac{q-2}{2}q^2(q+1)^2=q^5-q^3-3q^2+2.
\]
By coincidence, \eqref{coin} also shows that $\gcd(|G|^2[\tilde\pi,1_G],|G|)\le2$ and the claim $e'(G)=2$ follows from \autoref{prop}.

Now let $q$ be odd and $G=\SL_2(q)$.
This time the class equation of $G$ is
\[q^3-q=|G|=2\times 1+\frac{q-3}{2}\times q(q+1)+\frac{q-1}{2}\times q(q-1)+4\times\frac{q^2-1}{2}.\]
We obtain
\[
|G|^2[\tilde\pi,1_G]=2+\frac{q-3}{2}q^2(q+1)^2+\frac{q-1}{2}q^2(q-1)^2+(q^2-1)^2=q^5-q^4-q^3-4q^2+3.
\]
Since
\[(q^5-q^4-q^3-4q^2+3)(2-5q^2)+(q^3-q)(5q^4-5q^3-2q^2-23q)=6,\]
it follows that $\gcd(|G|^2[\tilde\pi,1_G],|G|)\in\{2,6\}$. If $3\nmid q$, then 
\[q^5-q^4-q^3-4q^2+3\equiv q-1-q-4+3\equiv1\pmod{3}\] 
and $\gcd(|G|^2[\tilde\pi,1_G],|G|)=2$. In this case, $e'(G)=2$ as desired.

Now let $3\mid q$. Then $e'(G)\mid 6$. It is well-known that the unitriangular matrices form a Sylow $3$-subgroup $P\cong\FF_q$ of $G$. Moreover, $C:=\C_G(P)=P\times\Z(G)\cong P\times\langle -1\rangle$. The normalizer $N:=\N_G(P)$ consists of the upper triangular matrices with determinant $1$. Hence, $\pcore_{3'}(N)=\Z(G)$ and $N/C\cong(\FF_q^\times)^2\cong C_{(q-1)/2}$ acts semiregularly on $P$ via multiplication. It follows that
\[\sum_{y\in P}|N:\C_N(y)|\equiv 1+(q-1)\frac{q-1}{2}\equiv 0\pmod{3}.\]
Thus, \autoref{red} shows $3\mid e'(G)$ and $e'(G)=6$. The final case $G=\PSL_2(q)$ with $q$ odd requires a distinction between $q\equiv\pm1\pmod{4}$, but is otherwise similar. We omit the details.
\end{proof}

\begin{Prop}
For every prime power $q>1$ and $G=\PSU_3(q)$ we have $e'(G)\mid 8$ and $e'(G)=2$ if $q\not\equiv -1\pmod{4}$. 
\end{Prop}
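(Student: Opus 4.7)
The plan is to mimic the proof of \autoref{SL2}: compute $|G|^2[\tilde\pi,1_G]$ as a polynomial in $q$ from the class equation of $\PSU_3(q)$, then bound $\gcd(|G|,|G|^2[\tilde\pi,1_G])$ via an explicit Bézout identity. Since evaluating only at $\chi=1_G$ already gives $e'(G)\mid\gcd(|G|,|G|^2[\tilde\pi,1_G])$, this upper bound suffices. The lower bound $2\mid e'(G)$ is free from \autoref{prop}\eqref{two}.

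The first ingredient is the conjugacy class data of $\PSU_3(q)$, which was worked out by Ennola and by Simpson--Frame and is by now standard: the classes split into a bounded number of families with centralizer orders polynomial in $q$ (such as $q^3$, $(q-1)(q+1)/d$, $q^2-q+1$, and cyclic tori orders), with a case distinction depending on $d:=\gcd(3,q+1)\in\{1,3\}$. Summing $|G:\C_G(x)|^2$ over class representatives $x$ produces $|G|^2[\tilde\pi,1_G]$ as an explicit polynomial $f_d(q)\in\ZZ[q]$ in each case.

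The second step is to exhibit polynomial identities $a(q)f_d(q)+b(q)|G|=c$ with $c\in\{2,8\}$, analogous to \eqref{coin}, by running the Euclidean algorithm in $\QQ[q]$ and clearing denominators. One chooses $c=2$ in the subcases where $q\not\equiv -1\pmod{4}$ and $c=8$ otherwise. Combined with $2\mid e'(G)$, this yields $e'(G)=2$ in the former situation and $e'(G)\mid 8$ in general.

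The main obstacle is bookkeeping: the class equation of $\PSU_3(q)$ is noticeably more intricate than that of $\SL_2(q)$, so several parallel calculations are required for the two values of $d$. The subtlety behind the split $q\equiv -1\pmod{4}$ versus $q\not\equiv -1\pmod{4}$ comes from the factor $(q+1)^2$ of $|G|$: only when $4\mid q+1$ can higher $2$-powers survive in $\gcd(|G|,f_d(q))$, and tracking the $2$-adic valuation of $f_d(q)$ in each residue class of $q$ modulo $4$ confirms the stated bound. No additional character beyond $1_G$ should be needed, as was the case throughout \autoref{SL2}.
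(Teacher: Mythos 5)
Your overall strategy---computing $f(q):=|G|^2[\tilde\pi,1_G]$ from the Simpson--Frame class data (with the case split on $d=\gcd(3,q+1)$) and bounding $\gcd(f(q),|G|)$ via a B\'ezout identity as in \autoref{SL2}---is exactly how the paper begins, and it does settle the case $q\not\equiv-1\pmod 4$: there $f(q)$ turns out not to be divisible by $4$, so $\gcd(f(q),|G|)=2$ and \autoref{prop} finishes. The gap is your closing assertion that no character beyond $1_G$ is needed. The computation shows that the trivial character alone only yields $\gcd(f(q),|G|)\mid 32$, and for $q\equiv 11\pmod{16}$ the integer $f(q)$ is actually divisible by $16$ while the $2$-part of $|G|=q^3(q-1)(q+1)^2(q^2-q+1)/d$ is $2^5$. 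Hence $\gcd(f(q),|G|)$ genuinely exceeds $8$ for such $q$, and no identity $a(q)f_d(q)+b(q)|G|=c$ with $a,b\in\ZZ[q]$ and $c\mid 8$ can exist, since any such $c$ is divisible by $\gcd(f(q_0),|\PSU_3(q_0)|)$ for every admissible $q_0$. Your plan therefore cannot deliver $e'(G)\mid 8$ in the residue class $q\equiv 11\pmod{16}$; relatedly, tracking the $2$-adic valuation of $f_d(q)$ only through $q$ modulo $4$ is too coarse, as the decisive case distinction is modulo $16$.

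The paper closes the problematic case by bringing in a second irreducible character: since $e'(G)$ divides $|G|^2[\tilde\pi,\chi]$ for \emph{every} $\chi\in\Irr(G)$, it suffices to exhibit one $\chi$ with $|G|^2[\tilde\pi,\chi]\not\equiv 0\pmod{16}$, and for $q\equiv 11\pmod{16}$ the Steinberg character does the job. You would need to add this (or an equivalent) second computation to make the argument complete.
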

\begin{proof}
The character table of $G$ was computed (with small errors) in \cite{FrameSimpson} based on the results for $\SU(3,q)$. It depends therefore on $\gcd(q+1,3)$. In any event we use GAP~\cite{GAP48} to determine the polynomial $f(q):=|G|^2[\tilde\pi,1_G]$ as in the proof of \autoref{SL2}. It turns out that $\gcd(f(q),|G|)$ always divides $32$. If $q\not\equiv -1\pmod{4}$, then $f(q)$ is not divisible by $4$ and the claim $e'(G)=2$ follows from \autoref{prop}. Now we assume that $q\equiv -1\pmod{4}$. Then $f(q)$ is divisible by $16$ only when $q\equiv 11\pmod{16}$. In this case however, $|G|^2[\tilde\pi,St]$ is not divisible by $16$ where $St$ is the Steinberg character of $G$.
\end{proof}

We conjecture that $e'(\PSU_3(q))=4$ if $q\equiv -1\pmod{4}$. 

\begin{Prop}
For $n\ge 1$ we have $e'(\Sz(2^{2n+1}))=2$.
\end{Prop}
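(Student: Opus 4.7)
The plan is to follow the template of \autoref{SL2}: use the class equation of $G = \Sz(q)$ to compute $|G|^2[\tilde\pi, 1_G] = \sum_i n_i^2$ as an explicit polynomial in $q$, and show $\gcd(|G|^2[\tilde\pi, 1_G], |G|) = 2$. Combined with $2 \mid e'(G)$ from \autoref{prop}\eqref{two}, this yields $e'(G) = 2$.

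Setting $r := 2^{n+1}$ (so $r^2 = 2q$), $a := q+r+1$ and $b := q-r+1$, Suzuki's description of the conjugacy classes of $G$ consists of: the identity; one class of involutions of size $(q-1)(q^2+1)$; two classes of order-$4$ elements, each of size $q(q-1)(q^2+1)/2$; $(q-2)/2$ non-identity classes inside a torus $A_0 \cong C_{q-1}$, each of size $q^2(q^2+1)$; $(q+r)/4$ classes inside $A_1 \cong C_a$, each of size $q^2(q-1)b$; and $(q-r)/4$ classes inside $A_2 \cong C_b$, each of size $q^2(q-1)a$. The algebraic identity $(q+r)b^2 + (q-r)a^2 = 2q(q^2-3)$, which follows from $ab = q^2+1$, $a+b = 2q+2$ and $a-b = 2r$, makes the $r$-dependence of $\sum_i n_i^2$ collapse, giving
\begin{equation*}
|G|^2[\tilde\pi, 1_G] = 1 + \frac{(q-1)^2(q^2+1)^2(q^2+2)}{2} + \frac{q^4(q^2+1)^2(q-2)}{2} + \frac{q^5(q-1)^2(q^2-3)}{2}.
\end{equation*}

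It then suffices to reduce this expression modulo each prime divisor of $|G| = q^2(q-1)(q^2+1)$. For an odd prime $p \mid q-1$, the two summands containing $(q-1)^2$ vanish and the remaining one reduces to $1 \cdot 4 \cdot (-1/2) \equiv -2$, giving $-1 \pmod p$. For an odd prime $p \mid q^2+1$, the summands containing $(q^2+1)^2$ vanish and the last one reduces to $q \cdot (-2q) \cdot (-2) = 4q^2 \equiv -4$, giving $-3 \pmod p$; here $p \ne 3$ because $q = 2^{2n+1} \equiv 2 \pmod 3$ forces $q^2+1 \equiv 2 \pmod 3$. Finally, for $p = 2$ and $n \ge 1$, the last two summands are divisible by $2^{8n+4}$, while the first two contribute $1 + 1 \equiv 2 \pmod 8$ (using $(q^2+2)/2 = 2^{4n+1} + 1 \equiv 1 \pmod{32}$), so $v_2(|G|^2[\tilde\pi,1_G]) = 1$. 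Hence $\gcd(|G|^2[\tilde\pi, 1_G], |G|) = 2$, and the claim follows. The main work is establishing the identity $(q+r)b^2 + (q-r)a^2 = 2q(q^2-3)$ so that $\sum_i n_i^2$ becomes a polynomial in $q$ alone; once that is in hand, the mod-$p$ analysis is routine.
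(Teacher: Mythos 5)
Your proof is correct, and it arrives at exactly the polynomial the paper obtains, namely $q^9-\tfrac32q^8-q^7+\tfrac72q^6-5q^5+\tfrac72q^4-5q^3+\tfrac72q^2-2q+2$ (I expanded your four summands and they agree coefficient by coefficient). The overall strategy is the same as the paper's and as the $\SL_2(q)$ template you cite: compute $|G|^2[\tilde\pi,1_G]=\sum_i n_i^2$, show its gcd with $|G|$ is $2$, and combine with \autoref{prop}\eqref{two}. The execution differs in two ways worth noting. First, the paper obtains the polynomial from the generic CHEVIE character table precisely to handle the irrational parameter $r=\sqrt{2q}$, whereas you eliminate $r$ by hand via the identity $(q+r)b^2+(q-r)a^2=2q(q^2-3)$ (which checks out: it is $q(a^2+b^2)-r(a^2-b^2)=q(2q^2+8q+2)-4r^2(q+1)$ with $r^2=2q$); this makes the computation verifiable without software, at the cost of quoting Suzuki's class equation. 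Second, the paper bounds the gcd by a Bézout-type polynomial identity showing it divides $6$ and then discards $3$ using $3\nmid|G|$, whereas you reduce the factored expression modulo each prime divisor of $|G|=q^2(q^2+1)(q-1)$ separately, getting $-1$, $-3$ and $2\bmod 4$ respectively; your observation that $3\nmid q^2+1$ (since $q\equiv 2\pmod 3$) plays the same role as the paper's remark that $3\nmid|G|$. Both routes are sound; yours is self-contained and computer-free, the paper's is shorter to state but leans on CHEVIE.
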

\begin{proof}
Let $q=2^{2n+1}$ and $G=\Sz(q)$. In order to deal with quantities like $\sqrt{q/2}$, we use the generic character table from CHEVIE~\cite{Chevie}. A computation shows that 
\[|G|^2[\tilde\pi,1_G]=q^9-\frac{3}{2}q^8-q^7+\frac{7}{2}q^6-5q^5+\frac{7}{2}q^4-5q^3+\frac{7}{2}q^2-2q+2\equiv 2\pmod{4}\]
and $\gcd(|G|^2[\tilde\pi,1_G],|G|)$ divides $6$. It is well-known that $|G|=q^2(q^2+1)(q-1)$ is not divisible by $3$. Hence, the claim follows from \autoref{prop}.
\end{proof}

For symmetric groups we determine the prime divisors of $e'(S_n)$.

\begin{Prop}\label{sym}
Let $p$ be a prime and let $n=\sum_{i\ge 0} a_ip^i$ be the $p$-adic expansion of $n\ge 1$. Then $p$ divides $e'(S_n)$ if and only if $2a_i\ge p$ for some $i\ge 1$. In particular, $e'(S_n)_p=1$ if $p>2$ and $n<p(p+1)/2$.
\end{Prop}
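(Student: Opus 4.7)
The plan is to apply \autoref{red} twice: first to reduce from $G=S_n$ to its Sylow normalizer $N$, and then, after splitting $N$ as a direct product, to each direct factor individually. Write $P=\prod_i P_i^{a_i}$ for a Sylow $p$-subgroup of $S_n$ (with $P_i\in\Syl_p(S_{p^i})$ and $P_0=1$); its normalizer in $S_n$ is $N=\prod_i (N_i\wr S_{a_i})$ with $N_i:=\N_{S_{p^i}}(P_i)$. By \autoref{red} and the multiplicativity of $e'$ coming from \autoref{nil}\eqref{direct}, $p\mid e'(S_n)$ is equivalent to $p\mid e'(N_i\wr S_{a_i})$ for some $i$. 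The factor $i=0$ is $S_{a_0}$, a $p'$-group since $a_0<p$, so it never contributes a factor of $p$. Hence the theorem reduces to showing that for $i\ge 1$ and $k:=a_i\in\{0,\ldots,p-1\}$, writing $G':=N_i\wr S_k$,
\[
p\mid e'(G')\ \Longleftrightarrow\ 2k\ge p.
\]

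For this, one first verifies $\pcore_{p'}(G')=1$: inductively $\pcore_{p'}(N_i)=1$ since the $p'$-complement of $N_i$ acts faithfully on $P_i$, and any normal $p'$-subgroup of $G'$ must inject into $S_k$ while being centralized by the Sylow $P':=P_i^k$, which forces it to be trivial. Consequently $\C_{G'}(P')=\Z(P')=\Z(P_i)^k$ by the usual decomposition $\C_{G'}(P')=\Z(P')\times\pcore_{p'}(\C_{G'}(P'))$. Since $P'\unlhd G'$ and the only $x\in\pcore_{p'}(G')$ is $x=1$, the last part of \autoref{red} yields the implication: $p\mid e'(G')$ whenever
\[
T_i(k):=\sum_{y\in\Z(P_i)^k}|G':\C_{G'}(y)|\equiv 0\pmod p.
\]
The converse uses the congruence $|G'|^2[\tilde\pi_{G'},1_{G'}]\equiv T_i(k)\pmod p$ (immediate from the reduction step in the proof of \autoref{red}) combined with the general divisibility $e'(G')\mid|G'|^2[\tilde\pi_{G'},1_{G'}]$; so $T_i(k)\not\equiv 0\pmod p$ forces $p\nmid e'(G')$.

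To evaluate $T_i(k)$, one uses that $\Z(P_i)\cong C_p$ and that the quotient $N_i\to\Aut(\Z(P_i))=C_{p-1}$ is surjective (a single $p'$-element of $N_i$ acts on $\Z(P_i)$ as the $d$-th power for any prescribed $d\in\FF_p^\times$). The $G'$-orbits on $\Z(P_i)^k$ are therefore parametrised by the number $r$ of nontrivial coordinates and have sizes $\binom{k}{r}(p-1)^r$, so
\[
T_i(k)=\sum_{r=0}^k\binom{k}{r}^2(p-1)^{2r}\equiv\sum_{r=0}^k\binom{k}{r}^2=\binom{2k}{k}\pmod p
\]
by Vandermonde's identity. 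Kummer's theorem gives $p\mid\binom{2k}{k}$ iff $2k\ge p$ (since $0\le k<p$), completing the equivalence. The ``in particular'' clause follows at once: $a_i\ge(p+1)/2$ for some $i\ge 1$ would force $n\ge a_ip^i\ge p(p+1)/2$. The most delicate step is the structural input $\pcore_{p'}(G')=1$; once that is granted, the rest is a clean combinatorial computation.
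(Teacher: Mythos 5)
Your proof is correct and follows essentially the same route as the paper: reduction to the Sylow normalizer via \autoref{red}, splitting into the wreath-product factors $\N_{S_{p^i}}(P_i)\wr S_{a_i}$ by the multiplicativity from \autoref{nil}, and the Vandermonde evaluation $\sum_{r}\binom{a_i}{r}^2(p-1)^{2r}\equiv\binom{2a_i}{a_i}\pmod{p}$ combined in both directions with \autoref{red}. The only difference is that you spell out two points the paper leaves implicit, namely $\pcore_{p'}(\N_{S_{p^i}}(P_i)\wr S_{a_i})=1$ (needed to invoke the last part of \autoref{red} after checking only $x=1$) and the surjectivity of $\N_{S_{p^i}}(P_i)\to\Aut(\Z(P_i))$ underlying the orbit count.
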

\begin{proof}
Let $G:=S_n$. For $i\ge 0$ let $P_i$ be a Sylow $p$-subgroup of $S_{p^i}$. Then $P:=\prod_{i\ge 0} P_i^{a_i}$ is a Sylow $p$-subgroup of $G$. By \autoref{red}, it suffices to consider $e'(N)$ where $N:=\N_G(P)$. Since
\[N=\prod_{i\ge 0} \N_{S_{p^i}}(P_i)\wr S_{a_i},\]
we may assume that $n=a_ip^i$ for some $i\ge 1$ by \autoref{nil}. It is well-known that $P_i$ is an iterated wreath product of $i$ copies of $C_p$. It follows that $\Z(P_i)$ has order $p$. Moreover, $\C_G(P)=\Z(P)=\Z(P_i)^{a_i}$. 
For $k=0,\ldots,a_i$ there are exactly $\binom{a_i}{k}(p-1)^k$ elements $(x_1,\ldots,x_{a_i})\in\Z(P)$ such that $|\{i:x_i\ne 1\}|=k$. It is easy to see that these elements form a conjugacy class in $N$. Consequently,
\[\sum_{x\in\Z(P)}|N:\C_N(x)|=\sum_{k=0}^{a_i}\binom{a_i}{k}^2(p-1)^{2k}\equiv\sum_{k=0}^{a_i}\binom{a_i}{k}^2\equiv\binom{2a_i}{a_i}\pmod{p}\]
by the Vandermonde identity. If $2a_i\ge p$, then $\binom{2a_i}{a_i}\equiv 0\pmod{p}$ since $a_i<p$. In this case, \autoref{red} yields $e'(N)\equiv 0\pmod{p}$. Now assume that $2a_i<p$. Then 
\[|N|^2[\tilde\pi(N),1_N]\equiv\sum_{x\in\Z(P)}|N:\C_N(x)|\equiv\binom{2a_i}{a_i}\not\equiv 0\pmod{p}.\]
Hence, $e'(N)_p=1$.
\end{proof}

Based on computer calculations up to $n=45$ we conjecture that
\[e'(S_n)_2=2^{a_1+a_2+\ldots}\]
if $p=2$ in the situation of \autoref{sym}. A(n anonymous) referee noted that this number coincides with $|\Z(P)|$ where $P$ is a Sylow $2$-subgroup of $S_n$. 
We do not know how to describe $e'(S_n)_p$ for odd primes $p$; it seems to depend only on $\lfloor n/p\rfloor$. We also noticed that
\[e'(S_n)=\begin{cases}
e'(A_n)&\text{if }n\equiv 0,1\pmod{4},\\
2e'(A_n)&\text{if }n\equiv 2,3\pmod{4}
\end{cases}\]
for $5\le n\le 45$. This might hold for all $n\ge 5$. In the following tables we list $\tilde e:=e'(G)/2$ for alternating groups and sporadic groups (these results were obtained with GAP).

\[
\begin{array}{*{4}{cc|}cc}
G&\tilde{e}&G&\tilde{e}&G&\tilde{e}&G&\tilde{e}&G&\tilde{e}\\\hline
A_{5}&1&A_{6}&3&A_{7}&3&A_{8}&3&A_{9}&1\\
A_{10}&1&A_{11}&1&A_{12}&2&A_{13}&2&A_{14}&2\\
A_{15}&2\cdot3^{2}\cdot5&A_{16}&3^{2}\cdot5&A_{17}&3^{2}\cdot5&A_{18}&3\cdot5&A_{19}&3\cdot5\\
A_{20}&2\cdot3\cdot5&A_{21}&2\cdot3\cdot5&A_{22}&2\cdot3\cdot5&A_{23}&2\cdot3\cdot5&A_{24}&2\cdot3^{2}\cdot5\\
A_{25}&2\cdot3^{2}&A_{26}&2\cdot3^{2}&A_{27}&2&A_{28}&2^{2}\cdot7&A_{29}&2^{2}\cdot7\\
A_{30}&2^{2}\cdot7&A_{31}&2^{2}\cdot7&A_{32}&7&A_{33}&3\cdot7&A_{34}&3\cdot7\\
A_{35}&3\cdot7&A_{36}&2\cdot7&A_{37}&2\cdot7&A_{38}&2\cdot7&A_{39}&2\cdot 7\\
A_{40}&2\cdot 5\cdot 7&A_{41}&2\cdot 5\cdot 7&A_{42}&2\cdot3^2\cdot5\cdot7&A_{43}&2\cdot3^2\cdot5\cdot7&A_{44}&2^2\cdot 3^2\cdot 5\cdot 7\\
A_{45}&2^2\cdot 3^2\cdot 5\cdot 7
\end{array}
\]

\[
\begin{array}{cc|cc|cc|cc|cc|cc}
G&\tilde{e}&G&\tilde{e}&G&\tilde{e}&G&\tilde{e}&G&\tilde{e}&G&\tilde{e}\\\hline
M_{11}&1&M_{12}&1&J_1&1&M_{22}&1&J_2&5&M_{23}&1\\
HS&1&J_3&1&M_{24}&1&McL&1&He&1&Ru&1\\
Suz&3&ON&1&Co_3&1&Co_2&1&Fi_{22}&1&HN&1\\
Ly&3&Th&1&Fi_{23}&2&Co_1&1&J_4&1&F_{24}'&1\\
B&1&M&1
\end{array}
\]

\section{Brauer characters}

For a given prime $p$, the restriction of our permutation character $\pi$ to the set of $p'$-elements $G_{p'}$ yields a Brauer character $\pi^0$ of $G$. Since $e(G)|G|\tilde\pi$ is a generalized character, there exists a smallest positive integer $f_p(G)$ such that $f_p(G)|G|\tilde\pi^0$ is a generalized Brauer character of $G$. Clearly, $f_p(G)$ divides $e(G)$. As in \cite{Navarro}, we set $[\phi,\mu]^0=\frac{1}{|G|}\sum_{g\in G_{p'}}\phi(g)\overline{\mu(g)}$ for class function $\phi$ and $\mu$ on $G$ (or $G_{p'}$). Recall that for every irreducible Brauer character $\phi\in\IBr(G)$ there exists a projective indecomposable character $\Phi_\phi$ such that $[\Phi_\phi,\mu]^0=\delta_{\phi\mu}$ where $\delta_{\phi\mu}$ is the Kronecker delta (\cite[Theorem~2.13]{Navarro}). 
We first prove the analogue of \autoref{chartable}.

\begin{Prop}\label{brauer}
Let $Y_p:=\overline{X_p}X_p^\mathrm{t}$ where $X_p$ is the $p$-Brauer character table of $G$. Then $Y_p$ is a symmetric, non-negative integral matrix with largest elementary divisor $f_p(G)|G|_{p'}$. In particular, $f_p(G)$ divides $e(G)_{p'}$.
\end{Prop}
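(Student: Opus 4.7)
The plan is to adapt the proof of \autoref{chartable} to the $p$-modular setting, replacing the ordinary orthogonality relations by the Brauer orthogonality relations. Let $\IBr(G)=\{\phi_1,\ldots,\phi_l\}$, let $g_1,\ldots,g_l$ represent the $p$-regular conjugacy classes, set $C:=\diag(|\C_G(g_1)|,\ldots,|\C_G(g_l)|)$, and let $\Psi=(\Phi_i(g_j))$ denote the table of projective indecomposable characters restricted to the $p$-regular classes. First I would establish
\[(Y_p)_{ij}=\sum_{r=1}^{l}\overline{\phi_i(g_r)}\phi_j(g_r)=\frac{1}{|G|}\sum_{g\in G_{p'}}|\C_G(g)|\overline{\phi_i(g)}\phi_j(g)=[\pi^0,\phi_i\overline{\phi_j}]^0.\]
Symmetry follows from the involution $g\mapsto g^{-1}$ on $p$-regular classes; integrality is a Galois argument using that $\Gal(\QQ_{|G|_{p'}}/\QQ)$ permutes IBrs and $p$-regular classes compatibly; and non-negativity comes from decomposing $\phi_i\overline{\phi_j}=\sum_k c_{ij}^k\phi_k$ with $c_{ij}^k\in\NN_0$ (products of Brauer characters decompose non-negatively) together with the decomposition $\pi^0=\sum_k b_k\phi_k$, $b_k\in\NN_0$, inherited from $\pi^0$ being the Brauer character of the genuine conjugation module.

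Next, the Brauer orthogonality $[\Phi_i,\phi_j]^0=\delta_{ij}$ rewrites as $\Psi C^{-1}\overline{X_p}^{\mathrm{t}}=I$, so $\overline{X_p}^{\mathrm{t}}=C\Psi^{-1}$ and
\[Y_p^{-1}=\overline{\Psi}C^{-2}\Psi^{\mathrm{t}},\qquad (Y_p^{-1})_{ij}=\sum_{r=1}^{l}\frac{\overline{\Phi_i(g_r)}\Phi_j(g_r)}{|\C_G(g_r)|^2}=[\tilde\pi^0,\Phi_i\overline{\Phi_j}]^0.\]
The key structural observation is that $\Phi_i\overline{\Phi_j}$ is the character of the projective $\mathcal{O}G$-module $P_i\otimes P_j^*$ (the tensor product with a projective is projective), hence a non-negative integer combination of PIM characters $\Phi_\phi$. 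Consequently $mY_p^{-1}$ is integral whenever $m[\tilde\pi^0,\Phi_\phi]^0\in\ZZ$ for every $\phi\in\IBr(G)$---equivalently, whenever $m\tilde\pi^0$ is a generalized Brauer character---which yields an upper bound $f_p(G)|G|$ on the largest elementary divisor.

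The sharpening from $f_p(G)|G|$ to $f_p(G)|G|_{p'}$ rests on the classical divisibility $|\C_G(g)|_p\mid\Phi_\phi(g)$ in $\mathbf{R}$ for $g\in G_{p'}$, a standard property of projective characters. Writing $\Phi_\phi(g_l)=|\C_G(g_l)|_p\alpha_{\phi l}$ with $\alpha_{\phi l}\in\mathbf{R}$, a short simplification shows that $|G|[\tilde\pi^0,\Phi_\phi]^0$ is a rational number with $p'$-denominator dividing $|G|_{p'}$; hence $f_p(G)$ (the $\lcm$ of these denominators) is coprime to $p$, which combined with the already-noted $f_p(G)\mid e(G)$ immediately gives $f_p(G)\mid e(G)_{p'}$. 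The same computation applied to products gives
\[(Y_p^{-1})_{ij}=\sum_{l}\frac{\overline{\alpha_{il}}\alpha_{jl}}{|\C_G(g_l)|_{p'}^2},\]
whose denominators divide $|G|_{p'}^2$; combining with the upper bound $f_p(G)|G|$ forces the largest elementary divisor to divide $\gcd(|G|_{p'}^2,f_p(G)|G|)=f_p(G)|G|_{p'}$. A matching lower bound---by exhibiting an entry of $Y_p^{-1}$ whose denominator is exactly $f_p(G)|G|_{p'}$---then completes the equality. The main obstacle will be the $p$-adic bookkeeping in this last step, especially invoking the PIM-character divisibility cleanly and identifying the extremal entry that witnesses the lower bound.
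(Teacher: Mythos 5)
Your identification of $Y_p^{-1}=\bigl([\tilde\pi,\Phi_i\overline{\Phi_j}]\bigr)_{i,j}$ via the duality $[\Phi_i,\phi_j]^0=\delta_{ij}$ agrees with the paper (reached by matrix algebra rather than scalar-product manipulation), and your use of the divisibility $\Phi_\phi(g)/|\C_G(g)|_p\in\bR$ for $g\in G_{p'}$ is a legitimate alternative to the paper's computation $\det(Y_p)=(|\C_G(g_1)|\cdots|\C_G(g_l)|)_{p'}$ for showing that $f_p(G)$ and the largest elementary divisor $e$ are $p'$-numbers; this already gives the ``in particular'' clause. But there is a decisive gap: the lower bound $f_p(G)|G|_{p'}\mid e$. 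Everything you actually prove bounds $e$ from above ($e\mid f_p(G)|G|$ and $e\mid |G|_{p'}^2$), and your proposed remedy --- exhibiting an entry of $Y_p^{-1}$ whose denominator is exactly $f_p(G)|G|_{p'}$ --- is not an argument but a restatement of what must be shown. The missing idea is to invert the passage from $\{\Phi_k\}$ to $\{\Phi_i\overline{\Phi_j}\}$: writing $1_G^0=\sum_i c'_{1i}\Phi_i^0$ with $(c'_{ij})=C^{-1}$ the inverse Cartan matrix, and using that $|G|_p$ is the largest elementary divisor of $C$, so $|G|_pc'_{ij}\in\ZZ$, one gets $|G|_pe[\tilde\pi,\Phi_i]=\sum_j(|G|_pc'_{1j})\bigl(e[\tilde\pi,\Phi_i\overline{\Phi_j}]\bigr)\in\ZZ$; hence $e|G|_p\tilde\pi^0$ is a generalized Brauer character and $f_p(G)|G|\mid e|G|_p$, i.e.\ $f_p(G)|G|_{p'}\mid e$. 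Without a step of this kind the equality $e=f_p(G)|G|_{p'}$ does not follow.

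A second, smaller flaw: your non-negativity argument for the entries of $Y_p$ does not work. From $\pi^0=\sum_k b_k\phi_k$ and $\phi_i\overline{\phi_j}=\sum_m c^m_{ij}\phi_m$ with non-negative coefficients you would still need $[\phi_k,\phi_m]^0\ge 0$; but these are the entries of $C^{-1}$, which are negative in general (for $G=S_3$, $p=3$ one has $[1_G,\sgn]^0=-\tfrac13$, which also shows such scalar products need not be integers, so the decomposition yields neither integrality nor non-negativity). Your Galois argument does rescue integrality, but for non-negativity you need Chillag's trace trick, which the paper uses: $\sum_l\overline{\phi_s(g_l)}\phi_t(g_l)=\tr A$, where $A$ is the non-negative integral matrix of multiplication by $\overline{\phi_s}\phi_t$ in the basis $\IBr(G)$.
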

\begin{proof}
Let $\IBr(G)=\{\phi_1,\ldots,\phi_l\}$ and $1\le s,t\le l$. Let $g_1,\ldots,g_l$ be representatives for the $p'$-conjugacy classes of $G$. 
Following an idea of Chillag~\cite[Proposition~2.5]{ChillagBr}, we define a non-negative integral matrix $A=(a_{ij})$ by $\phi_i\overline{\phi_s}\phi_t=\sum_{j=1}^la_{ij}\phi_j$. The equation $X_p^{-1}AX_p=\diag(\overline{\phi_s}\phi_t(g_i):i=1,\ldots,l)$ shows that
\[\tr A=\sum_{i=1}^l\overline{\phi_s}(g_i)\phi_t(g_i)=\frac{1}{|G|}\sum_{g\in G_{p'}}\pi(g)\overline{\phi_s}(g)\phi_t(g)=[\pi,\phi_s\overline{\phi_t}]^0\]
is a non-negative integer. At the same time, this is the entry of $Y_p$ at position $(s,t)$. By construction, $Y_p$ is also symmetric.

Now we compute the largest elementary divisor of $Y_p$ by using the projective indecomposable characters $\Phi_i:=\Phi_{\phi_i}$ for $i=1,\ldots,l$. 
For $1\le i,j\le l$ let $a_{ij}:=[\tilde\pi,\Phi_i\overline{\Phi_j}]$. Then
$\sum_{j=1}^la_{ij}\phi_j=(\Phi_i\tilde\pi)^0$ and
\begin{align*}
\sum_{k=1}^la_{ik}[\pi,\phi_k\overline{\phi_j}]^0=\Bigl[\pi,\sum_{k=1}^la_{ik}\phi_k\overline{\phi_j}\Bigr]^0=[\pi,(\Phi_i\tilde\pi)^0\overline{\phi_j}]^0=[\Phi_i,\phi_j]^0=\delta_{ij}.
\end{align*}
Hence, we have shown that $Y_p^{-1}=(a_{ij})$ (notice the similarity to $Y^{-1}$ in the proof of \autoref{chartable}). Since $f_p(G)|G|\tilde\pi^0$ is a generalized Brauer character, it follows that $f_p(G)|G|Y_p^{-1}$ is an integral matrix. In particular, the largest elementary divisor $e$ of $Y_p$ divides $f_p(G)|G|$. 

For the converse relation, recall that $[\phi_i,\phi_j]^0=c_{ij}'$ where $(c_{ij}')$ is the inverse of the Cartan matrix $C$ of $G$. Since $|G|_p$ is the largest elementary divisor of $C$, the numbers $|G|_pc_{ij}'$ are integers. The trivial Brauer character can be expressed as $1_G^0=\sum_{i=1}^lc_{1i}'\Phi_i^0$. Therefore,
\[|G|_pe[\tilde\pi,\Phi_i]=|G|_pe\sum_{j=1}^lc_{1j}'[\tilde\pi\Phi_j,\Phi_i]=\sum_{j=1}^l|G|_pc_{1j}'ea_{ij}\in\ZZ\]
for $i=1,\ldots,l$. Hence, $e|G|_p\tilde\pi^0$ is a generalized Brauer character and $f_p(G)|G|$ divides $e|G|_p$. 
Thus, $f_p(G)|G|_{p'}$ divides $e$. It remains to show that $e$ is a $p'$-number. 

Let $\Irr(G)=\{\chi_1,\ldots,\chi_k\}$ and $X_1:=(\chi_i(g_j))\in\CC^{k\times l}$. Let $Q$ be the decomposition matrix of $G$. Then $X_1=QX_p$ and the second orthogonality relation implies
\[\diag\bigl(|\C_G(g_i)|:i=1,\ldots,l\bigr)=X_1^\text{t}\overline{X_1}=X_p^\text{t}Q^\text{t}Q\overline{X_p}=X_p^\text{t}C\overline{X_p}.\]
By \cite[Corollary~2.18]{Navarro}, we obtain that $\det(Y_p)=|\det(X_p)|^2=(|\C_G(g_1)|\ldots|\C_G(g_l)|)_{p'}$. In particular, $e$ is a $p'$-number.
\end{proof}

In contrast to the ordinary character table, the matrix $X_p^\text{t}\overline{X_p}$ is in general not integral. Even if it is integral, its largest elementary divisor does not necessarily divide $|G|^2$. 
Somewhat surprisingly, $f_p(G)$ can be computed from the ordinary character table as follows.

\begin{Prop}\label{fpord}
The smallest positive integer $m$ such that $|G|_p|G|m[\tilde\pi,\chi]^0\in\ZZ$ for all $\chi\in\Irr(G)$ is $m=f_p(G)$.
\end{Prop}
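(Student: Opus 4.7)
The approach is to sandwich $m=f_p(G)$ between two divisibility claims: that $m=f_p(G)$ itself satisfies the hypothesis, and that any $m$ satisfying the hypothesis is a multiple of $f_p(G)$.

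For the first claim, I would start from the defining property of $f_p(G)$: there exist integers $b_i$ with $f_p(G)|G|\tilde\pi^0=\sum_{i=1}^l b_i\phi_i$. For $\chi\in\Irr(G)$, expand $\chi^0=\sum_j d_{\chi j}\phi_j$ via the decomposition matrix, and observe $[\tilde\pi,\chi]^0=[\tilde\pi^0,\chi^0]^0$. Writing $C^{-1}=(c'_{ij})$ for the inverse Cartan matrix and using $[\phi_i,\phi_j]^0=c'_{ij}$, one obtains
\[
f_p(G)|G|[\tilde\pi,\chi]^0=\sum_{i,j}b_id_{\chi j}[\phi_i,\phi_j]^0=\sum_{i,j}b_id_{\chi j}c'_{ij}.
\]
The paper already invoked in the proof of \autoref{brauer} that the elementary divisors of $C$ all divide $|G|_p$, so $|G|_pc'_{ij}\in\ZZ$. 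Multiplying the displayed equation by $|G|_p$ therefore yields the required integrality $f_p(G)|G|_p|G|[\tilde\pi,\chi]^0\in\ZZ$.

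For the converse direction, assume $m|G|_p|G|[\tilde\pi,\chi]^0\in\ZZ$ for every $\chi\in\Irr(G)$. Each projective indecomposable character is an integer combination of ordinary characters, $\Phi_i=\sum_\chi d_{\chi i}\chi$, so by linearity $[\tilde\pi,\Phi_i]^0=\sum_\chi d_{\chi i}[\tilde\pi,\chi]^0$, and the hypothesis forces $m|G|_p|G|[\tilde\pi^0,\Phi_i]^0\in\ZZ$ for every $i$. Since $\{\phi_j\}$ is the dual basis to $\{\Phi_i\}$ under $[\,\cdot\,,\,\cdot\,]^0$, a class function on $G_{p'}$ is a generalized Brauer character exactly when its inner products with all $\Phi_i$ lie in $\ZZ$. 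Hence $m|G|_p|G|\tilde\pi^0$ is a generalized Brauer character, and the minimality of $f_p(G)$ yields $f_p(G)\mid m|G|_p$. By \autoref{brauer}, $f_p(G)$ divides $e(G)_{p'}$ and is thus coprime to $p$, so $f_p(G)\mid m$, finishing the proof.

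The only serious ingredient is the integrality of $|G|_pC^{-1}$, which is precisely what produces the factor $|G|_p$ in the statement and lets the argument escape from $\frac{1}{|G|_p}\ZZ$ back to $\ZZ$; everything else is bookkeeping with the decomposition matrix and the duality $[\phi_j,\Phi_i]^0=\delta_{ji}$.
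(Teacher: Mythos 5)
Your proof is correct. The overall skeleton is the same as the paper's (two divisibility claims plus the fact from \autoref{brauer} that $f_p(G)$ is a $p'$-number to cancel the stray $|G|_p$), but the forward direction takes a genuinely different route: the paper invokes Navarro's Lemma~2.15, which extends $|G|_p$ times a generalized Brauer character by zero to an ordinary generalized character $\psi$ and then reads off $|G|_p|G|f_p(G)[\tilde\pi,\chi]^0=[\psi,\chi]\in\ZZ$, whereas you compute $[\tilde\pi,\chi]^0$ directly in the basis $\IBr(G)$ and extract integrality from $|G|_pC^{-1}\in\ZZ^{l\times l}$. The two are close cousins (the integrality of $|G|_pC^{-1}$ is essentially what drives Lemma~2.15), but your version is self-contained given facts the paper has already used in the proof of \autoref{brauer}, at the cost of some explicit bookkeeping with decomposition numbers. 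In the converse direction the paper writes each $\phi\in\IBr(G)$ as an integral combination of the $\chi^0$ and tests against $\IBr(G)$ (which forces integral coefficients only after multiplying back by the Cartan matrix), while you test against the projective indecomposables $\Phi_i$ and use the duality $[\phi_j,\Phi_i]^0=\delta_{ij}$ directly; your criterion for being a generalized Brauer character is the cleaner one, and both are valid.
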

\begin{proof}
By \cite[Lemma~2.15]{Navarro}, there exists a generalized character $\psi$ of $G$ such that 
\[\psi(g)=\begin{cases}
|G|_p|G|f_p(G)\tilde\pi(g)&\text{if }g\in G_{p'},\\
0&\text{otherwise}.
\end{cases}\]
In particular, $|G|_p|G|f_p(G)[\tilde\pi,\chi]^0=[\psi,\chi]\in\ZZ$ for all $\chi\in\Irr(G)$. Hence, $m$ divides $f_p(G)$. 

Conversely, every $\phi\in\IBr(G)$ can be written in the form $\phi=\sum_{\chi\in\Irr(G)}a_\chi\chi^0$ where $a_\chi\in\ZZ$ for $\chi\in\Irr(G)$ (see \cite[Corollary~2.16]{Navarro}). It follows that $|G|_p|G|m[\tilde\pi,\phi]^0\in\ZZ$ for all $\phi\in\IBr(G)$. This shows that $|G|_p|G|m\tilde\pi^0$ is a generalized Brauer character and $f_p(G)$ divides $|G|_pm$. Since $f_p(G)$ is a $p'$-number, $f_p(G)$ actually divides $m$. 
\end{proof}

In many cases we noticed that $f_p(G)=e(G)_{p'}$. However, the group $G=\PSp_4(5).2$ is an exception with $e(G)_{2'}/f_2(G)=3$. Another exception is $G=\PSU_4(4)$ with $e(G)_{5'}/f_5(G)=3$. 

Now we refine \autoref{pnilabel}.

\begin{Prop}\label{pnilabelbr}
For every prime $q\ne p$ we have $f_p(G)_q=1$ if and only if $|G'|_q=1$.
\end{Prop}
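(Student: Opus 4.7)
The plan is to imitate the proof of \autoref{pnilabel}, with the prime $q$ now playing the role that $p$ did there. The easy direction is immediate: if $|G'|_q=1$, then \autoref{pnilabel} gives $e(G)_q=1$, and since $f_p(G)$ divides $e(G)_{p'}$ by \autoref{brauer} and $q\neq p$, we conclude $f_p(G)_q$ divides $e(G)_q=1$.

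For the converse, assume $f_p(G)_q=1$. The crucial first step is to promote the generalized Brauer character $f_p(G)|G|\tilde\pi^0$ to a carefully chosen \emph{ordinary} generalized character. Observe that $f_p(G)$ divides $e(G)$, hence divides $|G|$ by \autoref{prop}\eqref{divs}, and being coprime to $q$ it therefore divides $|G|_{q'}$. Consequently $|G|_{q'}|G|\tilde\pi^0$ is an integer multiple of $f_p(G)|G|\tilde\pi^0$ and is still a generalized Brauer character. The standard lifting \cite[Lemma~2.15]{Navarro} already used in the proof of \autoref{fpord} then yields an ordinary generalized character
\[\psi(g):=\begin{cases}|G|_p|G|_{q'}|G:\C_G(g)|&\text{if }g\in G_{p'},\\ 0&\text{otherwise}.\end{cases}\]

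From this point the argument becomes a direct translation of the proof of \autoref{pnilabel}. Let $Q\in\Syl_q(G)$ and pick representatives $x_1,\ldots,x_k\in Q$ for the $G$-conjugacy classes of $q$-elements, noting that $Q\subseteq G_{p'}$ because $q\neq p$. For a maximal ideal $I$ of $\bR$ containing $q$, the congruence \cite[Lemma~4.19]{Navarro2} gives $\psi(x_i)\equiv\psi(1)=|G|_p|G|_{q'}\pmod{I}$, which is a unit modulo $q$ and hence modulo $|Q|$. Setting $m:=\phi(|Q|)$, Euler's theorem delivers $\psi(x_i)^m\equiv 1\pmod{|Q|}$, and \cite[Corollary~7.14]{Navarro2} applied to the generalized character $\psi^{m-1}$ gives
\[k\equiv\sum_{i=1}^k\psi(x_i)^m=|G|_p|G|_{q'}\sum_{g\in G_q}\psi(g)^{m-1}\equiv 0\pmod{|Q|}.\]
Combined with the trivial bound $k\le|Q|$ this forces $k=|Q|$, so every element of $Q$ lies in its own $G$-class; hence $Q\le\Z(\N_G(Q))$, $Q$ is abelian, and Burnside's transfer theorem shows that $G$ is $q$-nilpotent. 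Thus $G/\pcore_{q'}(G)$ is abelian and $|G'|_q=1$.

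The only genuinely new ingredient beyond \autoref{pnilabel} is the lifting step: one exploits $q\neq p$ twice, once to secure the divisibility $f_p(G)\mid|G|_{q'}$ and once to guarantee that the extra factor $|G|_p$ in front of $|G:\C_G(g)|$ does not destroy the key mod-$q$ congruence for $\psi(x_i)$. Once $\psi$ is in hand, the rest of the argument is essentially identical to the ordinary case.
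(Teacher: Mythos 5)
Your proof is correct and follows essentially the same route as the paper: lift the generalized Brauer character $|G|_{q'}|G|\tilde\pi^0$ to an ordinary generalized character, observe that $G_q\subseteq G_{p'}$ since $q\ne p$, and rerun the argument of \autoref{pnilabel} verbatim. The only (immaterial) difference is that you lift via \cite[Lemma~2.15]{Navarro}, picking up a harmless extra factor $|G|_p$ coprime to $q$, whereas the paper lifts via $\psi^0=\phi$ using \cite[Corollary~2.16]{Navarro}.
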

\begin{proof}
If $|G'|_q=1$, then $f_p(G)_q\le e(G)_q=1$ by \autoref{pnilabel}. Suppose conversely, that $f_p(G)_q=1$. Then there exists a generalized Brauer character $\phi$ of $G$ such that $\phi(g)=|G|_{q'}|G:\C_G(g)|$ for $g\in G_{p'}$. As usual there exists a generalized character $\psi$ of $G$ such that $\psi^0=\phi$. Since $G_q\subseteq G_{p'}$ we can repeat the proof of \autoref{pnilabel} at this point.
\end{proof}

Finally, we answer Navarro's question as promised in the introduction. The relevant case ($x=1$) was proved by the author while the extension to $x\in G_{p'}$ was established by G.\,R. Robinson (personal communication). 

\begin{Thm}\label{Cpreg}
The Brauer character table of $G$ determines $|\C_G(x)|_{p'}$ for every $x\in G_{p'}$. 
\end{Thm}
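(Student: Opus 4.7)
The plan is to exploit the inverse of the Brauer character table, $X_p^{-1}$, which is computable from $X_p$. By Brauer orthogonality between irreducible Brauer characters and projective indecomposable characters (the same identity underlying the proof of Proposition~\ref{brauer}),
\[
(X_p^{-1})_{s,j} = \frac{\overline{\Phi_j(g_s)}}{|\C_G(g_s)|},
\]
where $\Phi_j$ is the projective indecomposable character associated to $\phi_j\in\IBr(G)$. Thus each row of $X_p^{-1}$ carries information about the centralizer $|\C_G(g_s)|$ via its denominators. I would proceed in two stages: first the case $x=1$ (the author's argument that $|G|_{p'}$ is determined by $X_p$), then the extension to general $x\in G_{p'}$ following Robinson.

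For $x=1$, the first row of $X_p^{-1}$ consists of rational entries $\Phi_j(1)/|G|$. Using the classical divisibility $|G|_p\mid\Phi_j(1)$, write $\Phi_j(1)=|G|_p\cdot m_j$ with $m_j\in\ZZ_{>0}$, so that these entries reduce to $m_j/|G|_{p'}$. The least common multiple of the denominators in lowest terms then equals $|G|_{p'}/\gcd_j(m_j)$, and the case reduces to the arithmetic statement $\gcd_j(\Phi_j(1))=|G|_p$. I would prove this coprimality via Brauer-style induction: reducing to $p$-elementary subgroups $H=C\times P$ (cyclic $p'$ times $p$-group), where the claim is immediate since the PIMs of $H$ have the form $\lambda\otimes\FF_p P$ for $\lambda\in\Irr(C)$, all of dimension $|P|=|H|_p$. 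One then checks that induced projective characters from enough such $H\le G$ generate the required ideal in $\ZZ$.

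For general $x\in G_{p'}$, the $x$-row of $X_p^{-1}$ has entries $\overline{\Phi_j(x)}/|\C_G(x)|$, with numerators now in $\bR$. The key additional input is the classical integrality $|\C_G(x)|_p\mid\Phi_j(x)$ in $\bR$ for every projective character and every $p$-regular $x$. Writing $\Phi_j(x)=|\C_G(x)|_p\cdot\alpha_j$ with $\alpha_j\in\bR$, the entries become $\overline{\alpha_j}/|\C_G(x)|_{p'}$, whose $p'$-denominators control $|\C_G(x)|_{p'}$. Working prime-by-prime, for each $q\neq p$ dividing $|\C_G(x)|$ the $q$-adic valuation of $|\C_G(x)|$ is recovered from the local analog of the Stage~1 coprimality claim---that some $\alpha_j$ is a $q$-adic unit. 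Robinson's extension makes this transfer rigorous uniformly in $x$.

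The main obstacle is the coprimality claim $\gcd_j(\Phi_j(1))=|G|_p$ in Stage~1, together with its $q$-local analogs in Stage~2. These are genuine structural facts about the ideal generated by dimensions of projective indecomposable $\FF_p G$-modules (respectively, projective character values at $p$-regular elements) and not formal consequences of orthogonality alone; rigorous proofs seem to require Brauer-induction or block-theoretic input. Once these coprimality statements are established, the rest of the proof---computing $X_p^{-1}$ and reading off denominators---is essentially mechanical.
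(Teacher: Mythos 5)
Your reformulation via $X_p^{-1}$ is sound as far as it goes: the row of $X_p^{-1}$ indexed by $x$ has entries $\overline{\Phi_{\phi_j}(x)}/|\C_G(x)|=\overline{\alpha_j}/|\C_G(x)|_{p'}$ with $\alpha_j:=\Phi_{\phi_j}(x)/|\C_G(x)|_p\in\bR$, and this is just a transposed description of the class function $\rho=\sum_j\alpha_j\overline{\phi_j}$ that the paper works with (the essentially unique Brauer class function supported on the class of $x$). Reading off $|\C_G(x)|_{p'}$ as the denominator of that row is legitimate exactly when, for every rational prime $q\ne p$, not all $\alpha_j$ lie in $q\bR$. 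This is the coprimality claim you yourself flag as ``the main obstacle'', and you do not prove it: for $x=1$ you reduce it to $\gcd_j\Phi_{\phi_j}(1)=|G|_p$ and sketch a Brauer induction whose decisive step (``one then checks that induced projective characters from enough such $H\le G$ generate the required ideal'') is precisely the assertion to be established, and for general $x\in G_{p'}$ you give no argument at all beyond invoking ``Robinson's extension''. So the proposal has a genuine gap at its central point; everything else (inverting $X_p$, extracting denominators) is indeed mechanical once that claim is in hand.

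The paper closes this gap with a short pairing argument you could adopt verbatim: the class function $\theta$ equal to $|G|_p$ on $G_{p'}$ and $0$ elsewhere is a generalized \emph{projective} character (\cite[Lemma~2.15 and Corollary~2.17]{Navarro}), so $[\theta,\mu]^0\in\bR$ for every $\mu\in\bR[\IBr(G)]$. Now $[\theta,\rho]^0=|G:\C_G(x)|_p$ is a power of $p$ while $\rho(x)=|\C_G(x)|_{p'}$ is a $p'$-number, so no integer $d\ge 2$ divides both; hence $\rho/d\notin\bR[\IBr(G)]$ for all $d\ge 2$. Taking $d=q$ gives exactly your coprimality claim, uniformly in $x$ (and at $x=1$, where $[\theta,\rho]^0=1$, it yields $\gcd_j\Phi_{\phi_j}(1)=|G|_p$). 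With this input your denominator computation does recover $|\C_G(x)|_{p'}$ from $X_p$, and the two arguments become essentially the same proof; note also that you should justify, as the paper does, that the relevant quantities can be normalized inside $\QQ_{|G|}$ so that the ``denominator'' is detected by rational integers rather than arbitrary algebraic divisors.
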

\begin{proof}
It is easy to show that the (Brauer) class function 
\[\rho:=\sum_{\phi\in\IBr(G)}\frac{\Phi_{\phi}(x)}{|\C_G(x)|_p}\overline{\phi}\] 
vanishes off the conjugacy class of $x$ and $\rho(x)=|\C_G(x)|_{p'}$ (see \cite[proof of Theorem~2.13]{Navarro}). 
Thus, it suffices to determine $\rho$ from the Brauer character table $X_p$. By \cite[Lemma~2.21]{Navarro}, $\rho\in \mathbf{R}[\IBr(G)]$. Similarly, by \cite[Lemma~2.15 and Corollary~2.17]{Navarro}, the class function $\theta$, defined to be $|G|_p$ on $G_{p'}$ and $0$ elsewhere, is a generalized projective character of $G$. Moreover, $[\theta,\rho]^0=|G:\C_G(x)|_p$. For every integer $d\ge 2$, we have $\rho(x)/d\notin\ZZ$ or $[\theta,\rho]^0/d\notin\ZZ$. In particular, $\rho/d\notin\mathbf{R}[\IBr(G)]$.

Let $X_p'$ be the matrix obtained from $X_p$ of $G$ by deleting the column corresponding to $x$. Since $X_p$ is invertible, there exists a unique non-trivial solution $v\in\CC^l$ of the linear system $vX_p'=0$ up to scalar multiplication.
We may assume that the components $v_i$ of $v$ are algebraic integers in the cyclotomic field $\QQ_{|G|}$ and that $\sum_{i=1}^lv_i\phi_i(x)$ is a positive rational integer where $\IBr(G)=\{\phi_1,\ldots,\phi_l\}$. We may further assume that $\frac{1}{d}v\notin\mathbf{R}^l$ for every integer $d\ge 2$. Then by the discussion above, we obtain $\rho=\sum_{i=1}^lv_i\phi_i$. 
In particular, 
\[|\C_G(x)|_{p'}=\rho(x)=\sum_{i=1}^l v_i\phi_i(x)\]
is determined by $X_p$.
\end{proof}

G. Navarro made me aware that \autoref{Cpreg} can be used to give a partial answer to \cite[Question~C]{NavarroBrauerCT} as follows.

\begin{Thm}
Let $p\ne q$ be primes such that $q\notin\{3,5\}$. Then the $p$-Brauer character table of a finite group $G$ determines whether $G$ has abelian Sylow $q$-subgroups.
\end{Thm}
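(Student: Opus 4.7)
The strategy is to extract $q$-local arithmetic data from the $p$-Brauer character table and then apply a structural criterion for abelianness of Sylow $q$-subgroups. By \autoref{Cpreg} the table determines $|\C_G(x)|_{p'}$ for every $p$-regular $x$, and by the result on $|G|_{p'}$ established earlier in this section we also know $|G|_{p'}$ itself. Since $q\ne p$, every $q$-element of $G$ is $p$-regular, so both $|\C_G(x)|_q$ and $|G|_q$ can be recovered as $q$-parts of these known integers. The $q$-element conjugacy classes are themselves recognizable from the Brauer character table, since the order of a $p$-regular element is encoded in the Galois structure of the Brauer character values (a class has $q$-power order exactly when all Brauer values on it lie in $\ZZ[\zeta_{q^n}]$ for some $n$).

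I would then apply the following criterion: for $q\notin\{3,5\}$, $G$ has an abelian Sylow $q$-subgroup if and only if $|\C_G(x)|_q=|G|_q$ for every $q$-element $x\in G$, equivalently, every $G$-conjugacy class of $q$-elements has size coprime to $q$. Once this criterion is in hand, the theorem follows at once, since the condition is decidable from the data extracted in the previous paragraph.

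The forward direction of the criterion is elementary: if a Sylow $q$-subgroup $Q$ is abelian and $x$ is a $q$-element, then some $G$-conjugate of $x$ lies in $Q$, which then centralizes it, forcing $|\C_G(x)|_q=|Q|=|G|_q$. The converse is the main obstacle, asserting that when every $q$-element of $G$ lies in the center of some Sylow $q$-subgroup, the Sylow subgroup must itself be abelian. Via a standard minimal counterexample reduction this comes down to showing that a (nearly) simple group with non-abelian Sylow $q$-subgroup always contains a $q$-element $x$ with $|\C_G(x)|_q<|G|_q$. For $q=2$ this follows from the classical theory of $2$-fusion in simple groups (Alperin--Brauer--Gorenstein); for $q\ge 7$ it admits a uniform verification based on the structure of Sylow $q$-subgroups and unipotent classes in the finite simple groups of Lie type, supplemented by routine inspection of the sporadic and alternating families. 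The primes $q=3$ and $q=5$ are excluded precisely because certain small simple groups furnish genuine exceptions in which a non-abelian Sylow $q$-subgroup still satisfies the centralizer condition, so the hypothesis $q\notin\{3,5\}$ is unavoidable for this approach.
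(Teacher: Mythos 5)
Your overall strategy coincides with the paper's: combine \autoref{Cpreg} (which gives $|\C_G(x)|_{p'}$, hence $|\C_G(x)|_q$, for $p$-regular $x$, and $|G|_q$ via $x=1$) with the Navarro--Tiep criterion that for $q\notin\{3,5\}$ a group has abelian Sylow $q$-subgroups if and only if $|\C_G(x)|_q=|G|_q$ for all $q$-elements $x$. That criterion is a CFSG-level theorem; you should simply cite \cite{NavarroTiepCon} rather than sketch its proof, but that is a presentational point, not a gap.

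The genuine gap is in your identification of the $q$-element columns. Your proposed test --- a $p$-regular class consists of $q$-elements exactly when all Brauer character values on it lie in $\ZZ[\zeta_{q^n}]$ for some $n$ --- is false in the backward direction. Any class with rational Brauer character values passes the test for every prime $q$, since $\ZZ\subseteq\ZZ[\zeta_{q^n}]$. Concretely, for $G=S_3$, $p=5$, $q=2$, all values are rational integers, so your test declares the class of $3$-cycles to be a class of $2$-elements; since $|\C_G((123))|_2=1\ne 2=|G|_2$, your procedure would wrongly conclude that $S_3$ has non-abelian Sylow $2$-subgroups. Recognizing the $q$-element columns of a $p$-Brauer character table is a nontrivial result: the paper invokes \cite[Theorem~B]{NavarroBrauerCT} for precisely this step, and your argument needs that (or an equivalent correct detection argument) to go through.
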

\begin{proof}
By \cite{NavarroTiepCon}, $G$ has abelian Sylow $q$-subgroups if and only if $|\C_G(x)|_q=|G|_q$ for every $q$-element $x\in G$. By \cite[Theorem~B]{NavarroBrauerCT}, the columns of the Brauer character table corresponding to $q$-elements can be spotted. Hence, the result follows from \autoref{Cpreg}.
\end{proof}

\section*{Acknowledgment}
I like to thank Christine Bessenrodt and Ruwen Hollenbach for interesting discussions on this topic. I also thank Geoffrey R. Robinson and Gabriel Navarro for sharing their observations with me. Moreover, I appreciate valuable information on CHEVIE by Frank Lübeck. Finally I am grateful to two anonymous referees for pointing out a gap in a proof of a former version of the paper.
The work is supported by the German Research Foundation (\mbox{SA 2864/1-2} and \mbox{SA 2864/3-1}). 

{\small 

}

\end{document}